\documentclass[12pt]{amsart}
\usepackage{amsopn,amsmath,amssymb,amsthm,eucal,url,amscd,amsgen}
\usepackage{enumerate}
\usepackage[pagebackref]{hyperref}
\usepackage[arrow]{xy} 
\usepackage{setspace} 
\usepackage{multirow} 
\usepackage{xfrac} 
\usepackage{nccmath} 
\usepackage{enumitem} 
\usepackage{xcolor} 


\makeatletter
\newcommand{\mylabel}[2]{#2\def\@currentlabel{#2}\label{#1}}
\makeatother

\textwidth 14cm 
\textheight 20cm
\oddsidemargin .4in
\evensidemargin .4in

\newcommand{\nc}{\newcommand}
 
\nc{\bb}{\mathfrak{b}} \nc{\cc}{\mathfrak{c}} \nc{\dd}{\mathfrak{d}}
 \nc{\ggo}{\mathfrak{g}}
\nc{\hh}{\mathfrak{h}} \nc{\ii}{\mathfrak{i}}
\nc{\jj}{\mathfrak{j}} \nc{\kk}{\mathfrak{k}}
\nc{\mm}{\mathfrak{m}} \nc{\nn}{\mathfrak{n}}
\nc{\pp}{\mathfrak{p}} 
\nc{\rr}{\mathfrak{r}} \nc{\sg}{\mathfrak{s}}
\nc{\sso}{\mathfrak{so}} \nc{\spg}{\mathfrak{sp}}
\nc{\ssu}{\mathfrak{su}} \nc{\ssl}{\mathfrak{sl}}
\nc{\tog}{\mathfrak{t}} \nc{\uu}{\mathfrak{u}}
\nc{\vv}{\mathfrak{v}} \nc{\ww}{\mathfrak{w}}
\nc{\zz}{\mathfrak{z}}

\nc{\CC}{{\mathbb C}}
\nc{\DD}{{\mathbb D}}
\nc{\FF}{{\mathbb F}}
\nc{\GG}{{\mathbb G}}
\nc{\HH}{{\mathbb H}}
\nc{\II}{{\mathbb I}}
\nc{\JJ}{{\mathbb J}}
\nc{\KK}{{\mathbb K}}
\nc{\NN}{{\mathbb N}}

\nc{\RR}{{\mathbb R}}
\nc{\ZZ}{{\mathbb Z}}

\nc{\ggob}{\overline{\mathfrak{g}}}
\nc{\glg}{\mathfrak{gl}}

\nc{\pca}{\mathcal{P}} \nc{\nca}{\mathcal{N}}

\nc{\vp}{\varphi} \nc{\ddt}{\frac{{\rm d}}{{\rm d}t}}
\nc{\la}{\langle} \nc{\ra}{\rangle}
\nc{\brg}{[\,,\,]_{\ggo}}
\nc{\brv}{[\,,\,]_{\vv}}

\nc{\SO}{{\sf SO}} \nc{\Spe}{{\sf Sp}} \nc{\Sl}{{\sf Sl}}
\nc{\SU}{{\sf SU}} \nc{\Or}{{\sf O}} \nc{\U}{{\sf U}}
\nc{\Gl}{{\sf Gl}} \nc{\Se}{{\sf S}} \nc{\Cl}{{\sf Cl}}
\nc{\Spin}{{\sf Spin}} \nc{\Pin}{{\sf Pin}}

\nc{\sldr}{\operatorname{SL(2,\R)}}
\nc{\sldrt}{\operatorname{\widetilde{SL}(2,\R)}}
\nc{\Gamt}{\operatorname{\widetilde{\Gamma}}}
\nc{\alpt}{\operatorname{\widetilde{\alpha}}}
\nc{\gsldr}{\operatorname{\mathfrak{sl}(2,\R)}}
\nc{\gldr}{\operatorname{GL(2,\R)}}
\nc{\sldz}{\operatorname{SL(2,\Z)}}
\nc{\B}{\operatorname{B}}
\nc{\oscn}{\operatorname{Osc_n(\lambda_1,...,\lambda_n)}}

\nc{\ad}{\operatorname{ad}} \nc{\Ad}{\operatorname{Ad}}
\nc{\coad}{\operatorname{coad}}
\nc{\rank}{\operatorname{rank}} \nc{\Irr}{\operatorname{Irr}}
\nc{\End}{\operatorname{End}} \nc{\Aut}{\operatorname{Aut}}
\nc{\Inn}{\operatorname{Inn}} \nc{\Der}{\operatorname{Der}}
\nc{\Ker}{\operatorname{Ker}} \nc{\Iso}{\operatorname{Iso}}
\nc{\Le}{\operatorname{L}} \nc{\Fe}{\operatorname{F}}
\nc{\tr}{\operatorname{tr}}
\nc{\dif}{\operatorname{d}} \nc{\sen}{\operatorname{sen}}
\nc{\modu}{\operatorname{mod}} \nc{\Ric}{\operatorname{R}}
\nc{\Sym}{\operatorname{Sym}} \nc{\sca}{\operatorname{sc}}
\nc{\scalar}{{\sf s}} \nc{\grad}{\operatorname{grad}}
\nc{\ricci}{\operatorname{r}} \nc{\riccin}{\operatorname{Ric}}
\nc{\Lie}{\operatorname{L}} \nc{\ct}{\operatorname{T}}


\newcommand{\lela}{\left \langle}
\newcommand{\rira}{\right \rangle}
\newcommand{\bil}{\lela\,,\,\rira}

\nc{\mr}{{\mathfrak r}}
\nc{\ms}{{\mathfrak s}}
\nc{\mv}{{\mathfrak v}}
\nc{\lra}{\longrightarrow}
\nc{\R}{{\mathbb R}}
\nc{\Q}{{\mathbb Q}}
\nc{\Z}{{\mathbb Z}}

\newcommand{\mgg}{\mathfrak g}

\nc{\hs}{{G/\Gamma}}

\theoremstyle{plain}
\newtheorem{thm}{Theorem}[section]
\newtheorem{prop}[thm]{Proposition}

\newtheorem{lem}[thm]{Lemma}

\theoremstyle{definition}
\newtheorem{defn}[thm]{Definition}

\theoremstyle{remark}
\newtheorem{rem}{Remark}

\newtheorem{exa}[thm]{Example}

\newtheorem{obs}[thm]{Observations}

\begin{document}

\title[Geodesics and isometries on compact Lorentzian solvmanifolds]{Geodesics and isometries on compact Lorentzian solvmanifolds}

\begin{abstract}
The aim of this work is the study of geodesics on Lorentzian homogeneous spaces of the form $M=G/\Lambda$, where $G$ is a solvable Lie group endowed with a bi-invariant Lorentzian metric and $\Lambda < G$ is a cocompact lattice. Conditions to assert closeness of light, time or spacelike geodesics on the compact quotient spaces are given. This study implicitly needs extra information of the lattices in every case. We found conditions to assert that every lightlight geodesic on the quotient space is closed. And more important, this situation depends only on the lattice. Moreover, even in dimension four, there are examples of compact solvmanifolds for which not every lightlike geodesic is closed.
\end{abstract}

\author{Pablo Montenegro and Gabriela P. Ovando}

\let\today\relax 

\thanks{{\it (2000) Mathematics Subject Classification}: 53C50
	53C22
	22F30
	57S25}

\thanks{{\it Key words and phrases}: Lorentzian geometry, geodesics, compact solvmanifolds.}

\thanks{Partially supported by ANPCyT, SCyT-UNR.}

\address{Departamento de Matemática, ECEN - FCEIA, Universidad Nacional de Rosario, Pellegrini 250, 2000 Rosario, Santa Fe, Argentina.}

\email{gabriela@fceia.unr.edu.ar}

\maketitle

	\section{Introduction}
A Lorentzian manifold is a  connected, smooth, finite-dimensional manifold \\ $(M, \la \, , \ \ra)$, together with a Lorentzian metric, i.e. a second-order smooth
tensor field on $M$ which induces, for every $p\in M$, a bilinear form of index $1$ on the tangent
space $T_pM$ (cf. e.g. \cite{ON}). The geodesics on $M$ are the smooth curves $\gamma(t)$, satisfying the
differential equation
$$\nabla_{\gamma'(t)}\gamma'(t)=0,$$
where $\nabla$ denotes the Levi-Civita connection for $\gamma$. 	
	The genuine interest in the study of Lorentzian manifolds relies on the fact that the models of 	space-time in general relativity are four-dimensional Lorentzian manifolds. In this setting the role of a given geodesic depends on the initial. Thus, 	
a timelike geodesic, that is, for which  $\la \gamma'(t), \gamma'(t)\ra < 0$   represents the world line of a particle
	under the action of a gravitational field. While if $\la \gamma'(t), \gamma'(t)\ra = 0$, the geodesic is called lightlike or
	null, and it represents the world line of a light ray.
	
	The study of closed geodesics is a classical topic in Riemannian geometry, also treated in the Lorentzian geometry with very different techniques. Galloway in \cite{Ga}
	statet that any closed, i.e. compact without boundary, Lorentzian surface contains
	at least one closed timelike or lightlike periodic geodesic. In \cite{Su} Suhr proved that every closed Lorentzian surface contains two closed geodesics, one of
	which is definite, i.e. time- or spacelike. Indeed there are many open questions. 
	
	Lie groups are good sources to study the behavior in certain situations. In particular in this framework, in \cite{BOV} the authors show families of compact Lorentzian manifolds for which every lightlike geodesic is closed. Motivated from this result, in the present paper we investigate the situation in higher dimensions and in every lattice. Precisely take the oscillator groups of dimension $2n+2$ equipped with a Lorentzian bi-invariant metric and consider discrete subgroups such that the corresponding quotient space $M$ is compact.  We improve the result above by showing that the statement depends on the lattice.  Precisely, Theorem \ref{teoremaoscilador} shows a condition in the lattice which implies that either every lightlike geodesic in the compact manifold $M$ is closed or there is exactly one direction for which lightlike geodesics is closed and for any other direction is non closed. 
	
	To complete the study we determine the existence of closed and open timelike and spacelike geodesics. In Theorem \ref{othergeodesics} we proved that there always exist every kind of such geodesics and show explicit examples of every situation. 
	
	Finally we study isometries in the compact quotients. It was proved in \cite{BG} that the identity component of the isometry group of a pseudo-Riemannian compact space coincides with $G$, whenever $G$ is a solvable Lie group acting by isometries.  We based the study in the results obtained in \cite{Bou} where the isometry groups of the oscillators Lie groups were computed, when considered with a bi-invariant metric. By generalizing results in \cite{BOV} we proceed to compute some of the isometry groups in  compact spaces.  We notice that isometries fixing the identity element in the oscillator groups strictly include the conjugation maps (see Theorem 4.6). However to induce isometries to the quotient spaces one gets  a conjugation by an element of the normalizer of the corresponding lattice (see Proposition 4.11). On the other hand any left-translation will be induced to the quotient. Computations of the normalizer of the lattices are much more complicated in higher dimensions. In the final section we show some examples of those computations.

	\section{Lie groups with Lorentzian bi-invariant metrics}\label{section1}
	In this section one can find an introduction to general  results about Lie groups with bi-invariant Lorentzian metrics. 
	
	Let $G$ denote a (real) Lie group with Lie algebra $\mgg$. 
	A \textit{bi-invariant} metric on  $G$ is a pseudo-Riemannian metric $\bil$ for which every translation on the left $L_g$ and on the right $R_g$ by elements of the group $g\in G$, are isometries. This gives that the conjugation maps $I_g: G \to G$, $I_g(x)=gxg^{-1}$ are isometries. And so,  the differential of the Adjoint map is a linear isometry on $\mgg$, $d(I_g)_e= Ad(g)$. One has the following equivalences (see Chapter 11 in \cite{ON}):
	\begin{enumerate}\label{[(i)]}
		\item $\bil$ bi-invariant;
		\item $\bil$ Ad($G$)-invariant;
		\item $\lela [X, Y], Z\rira + \lela Y, [X, Z]\rira= 0$ for all $X, Y, Z \in\mgg$;
		\item the geodesics of $G$ starting at the identity element $e$ are the one-parameter subgroups of $G$, that is:
		\begin{equation}\label{onepara}
			\alpha(t)=\exp(tX), \qquad \mbox{ for }X  \in \mgg, 
		\end{equation}
		and the geodesic through $g\in G$ with initial left-invariant vector $X$ is given by the translation of the curve above, that is $g\exp(tX)$. 
	\end{enumerate}
	If the bi-invariant metric on a Lie group $G$ of dimension $n$ has signature $(1,n-1)$, the metric is called a {\em Lorentzian metric}. 
	
A given vector field  $X\in TG$  is called 
	\begin{itemize}
		\item {\em spacelike} whenever $\lela X,X \rira >0$;
		\item {\em timelike} whenever $\lela X,X \rira < 0$;
		\item {\em lightlike} or {\em null}  if  $\lela X,X \rira = 0$.
	\end{itemize}
More generally this is extended to geodesics: 	a geodesic on $G$ with  initial condition $X$, namely $\gamma_X(t)$, is called {\em spacelike, timelike or lightlike} if $X$ is in the respective class above.

	Examples of Lie groups with bi-invariant Lorentzian metrics arise from the so called  \textit{oscillator groups}. Denoted by $\oscn$, an oscillator Lie group is the simply connected Lie group with real Lie algebra of dimension $2n+2$, namely $\mathfrak{osc}_n(\lambda_1,...,\lambda_n)$, with $\lambda_i\in \R_{>0}$. This Lie algebra is  spanned by the basis $Z$,$\{X_i,Y_i\}_{i=1}^n$, $T$ satisfying the non-trivial Lie bracket relations
	\[ [X_i,Y_i]=Z, \quad [T, X_i]=\lambda_i Y_i, \quad  [T, Y_i]=- \lambda_i X_i.  \]
Denote by $\lela\,,\, \rira$  the ad-invariant  metric on $\mathfrak{osc}_n(\lambda_1,...,\lambda_n)$ with  the non-zero relations
	\[ \lambda_i \lela X_i,X_i \rira  = \lambda_i \lela Y_i,Y_i \rira = \lela Z,T  \rira  = 1.\]
	
	The oscillator Lie groups   have the differential structure of $\R \times \R^{2n} \times \R$ with the following group product
	\begin{equation*}
		(z_1,v_1,t_1) . (z_2,v_2,t_2)=(z_1+z_2+\frac{1}{2}v_1^{\tau}J R(t_1)v_2,v_1+R(t_1)v_2,t_1+t_2),
	\end{equation*}
	\[\text{where \,  } 
	R(t_1)= e^{t_1 N_{\lambda}}, \, N_\lambda=\left( \begin{matrix}
		J_{\lambda_1} &  & \mathbf{0} \\
		& \ddots & \\
		\mathbf{0} & & J_{\lambda_n}
	\end{matrix} \right),\, 
	J_{\lambda_i}=\left( \begin{matrix}
		0 & -\lambda_i \\
		\lambda_i & 0
	\end{matrix} \right), \, J = N_{(-1, \hdots, -1)}.
	\]
	
	for $v_1, v_2\in \RR^{2n}$. By $v^{\tau}$ we denote the transpose of $v$. 
	Take the corresponding left-invariant metric on the Lie group, which for usual  coordinates for $i=1, \hdots, n$: $z, x_i,y_i, t$ in $\R^{2n+2}$  can be written as
	\begin{equation}\label{metricosc}
		g=dt (dz +\sum_{j=1}^{n} y_j dx_j+ x_j dy_j) +\sum_{j=1}^{n}\frac1{\lambda_j}(dx_j^2+dy_j^2).
	\end{equation}
	
	The Christoffel symbols corresponding to the metric above follow
	
	\[ \Gamma^1_{2n+2 \,\, 2i}=-\frac{x_{i} \lambda_{i}}{4} \quad \Gamma^1_{2n+2 \,\, 2i+1}=-\frac{y_{i} \lambda_{i}}{4}, \quad i=1,..., n \]
	
	\[ \Gamma^{2i}_{2n+2 \,\, 2i}=\frac{\lambda_i}{2} \quad \Gamma^{2i+1}_{2n+2 \,\, 2i}=-\frac{\lambda_i}{2}, \quad i=1,..., n \]
	
	being the others trivial and following  symmetry relations. 
	
	The resulting equations for the geodesics can be written in the usual  coordinates of $\RR^{2n}$ as:
	\begin{equation}\label{geodcomp}
		\begin{array}{rcl}
			z''(s)&= & \frac{ t'(s)}{2}\sum_{k=1}^{n}  \lambda_k \left( x_k'(s) x_k(s)+y_k'(s) y_k(s) \right) \\ \vspace{.2cm}
			x_i''(s)&=&-\lambda_i y_i'(s) t'(s),\\ \vspace{.2cm}
			y_i''(s)&=&\lambda_i x_i'(s) t'(s),\\ \vspace{.2cm}
			t''(s)&=&0,
		\end{array}
	\end{equation}
		which follows from the general geodesic equation, $\frac{d^2 \gamma^k}{d t^2} + \sum_{i,j} \Gamma^k_{i j}(\gamma) \frac{d \gamma^i}{dt} \frac{d \gamma^j}{dt} = 0$ (see \cite{ON}).

	In particular, those geodesics $\gamma_X(s)=(z(s), (x_j(s),y_j(s)),t(s))$, $j=1, \hdots n$ starting at the identity element  with initial condition $X =  d \ Z + \sum_j (b_j X_j + c_j Y_j) + a T$ are:
	
	\begin{itemize}
		\item for $a \neq 0$:
			\begin{eqnarray} \label{geo_osc_1}
			z(s)&= & \left(d + \frac{1}{2 a} \sum_{k=1}^{n} \frac{ b_{k}^{2}+c_k^{2}}{\lambda_k}\right)s- \frac{1}{2 a^{2}} \left(  \sum_{k=1}^{n} \frac{b_{j}^{2}+c_j^2}{\lambda_k^{2}} \sin(\lambda_k a s) \right),\\
			x_j(s)&=&  \frac{1}{a \lambda_j} \left(   {b_j}sin(\lambda_j a s)+{c_j}cos(\lambda_j a s)-{c_j} \right),\\
			y_j(s)&=&  \frac{1}{a \lambda_j}  \left(    -{b_j}cos(\lambda_j a s)+{c_j} sin(\lambda_j a s)+{b_j} \right),\\ 
			t(s)&=&a s,
		\end{eqnarray}
		\item while for $a=0$, one has:
		\begin{equation}\label{geo2}
			(z,(x_j,y_j),t)(s)=(ds,(b_j s,c_j s),0). 
		\end{equation}
		
	\end{itemize}
	
	It is not hard to check that for the initial velocity $X \in \mathfrak{osc}_n(\lambda_1,...,\lambda_n)$ as above, the corresponding geodesic is:
	\begin{itemize}
		\item lightlike if $2 a d + \sum_{k=1}^{n} \frac{b_k^2+c_k^2}{\lambda_k} = 0$,
		\item timelike if $2 a d + \sum_{k=1}^{n} \frac{b_k^2+c_k^2}{\lambda_k} < 0$, 
		\item or spacelike if $2 a d + \sum_{k=1}^{n} \frac{b_k^2+c_k^2}{\lambda_k} > 0$.
	\end{itemize}
	
	Note that the oscillator  Lie groups are also complete spaces. 
	
	\smallskip
	
	\begin{rem} Medina and Revoy  in \cite{Me,MeRe} proved that the Lie algebras $\mathfrak{osc}_n(\lambda_1,...,\lambda_n)$ ($\lambda_i > 0$) and $\gsldr$ are the only indecomposable ones admitting a Lorentzian ad-invariant metric. Recall that a Lie algebra provided with a metric is called \textbf{indecomposable} if the restriction of the metric to any proper ideal is degenerate. 
	\end{rem}

	\subsection{Quotient spaces} 

Let $G$ denote a Lie group and let  $\Gamma\subset G$ be a discrete cocompact subgroup. 	The quotient space $M=G/\Gamma$ consists of elements of the form $g\Gamma$ with $g \in G$. Since $\Gamma$ is closed, there exists a unique manifold structure on $M$ for which the canonical projection $g \mapsto g\Gamma$ is a smooth submersion (see \cite{Hel}). Finally, the geometry of $M$ is provided by requiring the projection, named $\pi$, to be a local isometry. Whenever the Lie group  $G$ is provided with a Lorentzian metric, $(G,\pi)$ is called  a \textit{Lorentzian covering} of $M$.

Assume $G$ is equipped with a bi-invariant metric. It follows  that the geodesics of $M$ starting at $o:=\pi(e)$ are of the form $\hat{\alpha}=\pi(\alpha(t))$, where $\alpha$ is a one parameter subgroup of $G$ (see \cite{ON}). In addition to this, $G$ acts on $M$ by the "translations on the left" which are isometries:
	\begin{eqnarray*}
		\tau_g : M \rightarrow M\qquad \mbox{given by} \quad 
		\tau_g(h\Gamma):=gh\Gamma,
	\end{eqnarray*}
	 showing that $M=G/\Gamma$ is a homogeneous space. 
	
One can notice that: 
	
	\begin{enumerate}
		\item A geodesic of $G/\Gamma$ starting at $g\Gamma$ is the translation via $\tau_g$ of some geodesic starting at $o$. \label{punto1}
		\item Every geodesic in $G/\Gamma$ is the projection via $\pi$ of some geodesic in $G$.\label{punto2}
		\item Lighlike, timelike and spacelike geodesics of $G$ project to lightlike, timelike and spacelike geodesics of $M$ respectively.
	\end{enumerate}
	
Since $\pi \circ L_g = \tau_g \pi$, one gets that $\tau(g)\pi\circ \alpha=\pi\circ L_g \circ \alpha$ for a curve $\alpha:(a,b)\to G$ starting at the identity element  $e\in G$. 

	 A curve $\beta:(a,b)\to G$ (or to $M$) is said {\em non-simple } when it passes through a same point more than once, that is, there exist $t_2\neq  t_1$ such that $\beta(t_1)=\beta(t_2)$. 
	\begin{enumerate}
		\setcounter{enumi}{3}
		
		\item A geodesic $\alpha: (-\varepsilon, \varepsilon) \to G$, with $\varepsilon>0$ and   $\alpha(0)=e$   giving rise the the curve $\pi\circ \alpha$ in $M$  is non-simple in $M$ if and only if  $\alpha(t) \in \Gamma$ for some $t>0$.\label{punto4}
	\end{enumerate}
	In particular the projection of a non-simple geodesic in $G$ is always a closed curve in $M$.

	A final result for non-simple geodesics comes from the following lemma which, when combined with item (\ref{punto4}) states that every non-simple geodesic in the quotient manifold is actually a periodic curve. 
	
	\begin{lem}\cite{BOV}  Let $G$ be a Lie group, let $K < G$ be any closed Lie  subgroup of $G$ such that  $\pi: G \to G/K$ denotes the 
		usual projection. Let $\alpha: \RR \to G$ denote a  one-parameter subgroup of $G$.
		If $\pi \circ \alpha$ is non-simple in $G/K$ then it is periodic.
	\end{lem}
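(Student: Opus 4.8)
The plan is to exploit the homomorphism property of the one-parameter subgroup to turn the non-simplicity hypothesis into a single group-theoretic statement, and then to read off periodicity directly. The closedness of $K$ will play no role in the argument itself (it is only what guarantees that $G/K$ is a manifold), so the conclusion is really an algebraic fact about cosets.

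First I would unwind the definition of non-simple. Since $\pi\circ\alpha$ is non-simple in $G/K$, there exist parameters $t_1\neq t_2$ with $(\pi\circ\alpha)(t_1)=(\pi\circ\alpha)(t_2)$, that is $\alpha(t_1)K=\alpha(t_2)K$. By the definition of the coset space this is equivalent to $\alpha(t_2)^{-1}\alpha(t_1)\in K$. So far this holds for an arbitrary curve $\alpha$; the extra structure comes next.

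The key step is to use that $\alpha$ is a one-parameter subgroup, hence a group homomorphism $\alpha:\RR\to G$ with $\alpha(0)=e$. Consequently $\alpha(t_2)^{-1}=\alpha(-t_2)$ and $\alpha(-t_2)\alpha(t_1)=\alpha(t_1-t_2)$. Setting $T:=t_1-t_2\neq 0$ we obtain $\alpha(T)\in K$, and after replacing $T$ by $-T$ if necessary (using $\alpha(-T)=\alpha(T)^{-1}\in K$) we may assume $T>0$. Thus the whole content of the hypothesis is the existence of a single positive number $T$ with $\alpha(T)\in K$.

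Finally, periodicity is immediate. For every $t\in\RR$ the homomorphism property gives $\alpha(t+T)=\alpha(t)\alpha(T)$, and since $\alpha(T)\in K$ the corresponding right cosets coincide: $\alpha(t+T)K=\alpha(t)\alpha(T)K=\alpha(t)K$. Therefore $(\pi\circ\alpha)(t+T)=(\pi\circ\alpha)(t)$ for all $t$, which is exactly periodicity with period $T$. There is essentially no obstacle in this argument; the only point requiring a moment's care is the passage from equality of cosets at two distinct parameters to the relation $\alpha(T)\in K$, which is precisely where the homomorphism property is indispensable. Combined with item~(\ref{punto4}), this is what makes every non-simple projected geodesic in the quotient a genuinely periodic curve.
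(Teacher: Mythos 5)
Your proof is correct and is essentially the argument of the cited source \cite{BOV}: non-simplicity gives $\alpha(t_2)^{-1}\alpha(t_1)=\alpha(t_1-t_2)\in K$, and the homomorphism property then yields $\alpha(t+T)K=\alpha(t)K$ for all $t$, i.e.\ periodicity with period $T=|t_1-t_2|$; you are also right that closedness of $K$ is irrelevant to the algebra. One cosmetic point: the cosets $\alpha(t)K$ are customarily called \emph{left} cosets, not right cosets, but this does not affect the argument.
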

	
In this paper, {\em closed} geodesics will be periodic ones.

	\section{The solvmanifolds from the Oscillator groups}\label{sectionosc}
	
	This section is concerned with the study of geodesics of Lorentian compact spaces  $$M=\oscn/ \Gamma,$$ where $\Gamma$ is a cocompact lattice in $\oscn$. The following results shows a condition to construct such lattices. 
	
	\begin{lem}\cite{MeRe}\label{lema_medina}
		An oscillator group $\oscn$ admits a lattice if and only if the numbers $\lambda_j$ generate an additive discrete subgroup of $\R$.
	\end{lem}
	
	In the demonstration of the previous lemma it is  shown that for a lattice $\Gamma$, the set $\mathrm{T}(\Gamma):=\{ t \in \R : (z,u,t) \in \Gamma \}$ is an additive discrete subgroup of $\RR$. 
	 Let $t_0$ denote the positive generator of $\mathrm{T}(\Gamma)$. 
	
	Notice that for $(w,b,0) \in \Gamma$, the set of elements in the lattice
	\begin{equation*}
		\{ (z,u,t_0)^n.(w,b,0).(z,u,t_0)^{-n}=(w,e^{n t_0 N_{\lambda}}b,0) : n \in \mathbb{N} \}
	\end{equation*}
	 is a finite set, since they are elements of a discrete cocompact lattice. Furthermore there is a smaller positive integer $K_0$ such that $e^{K_0 t_0 N_{\lambda}} = Id$. In particular it follows that $t_0$ satisfies
	\begin{equation} \label{oscilator-N}
		t_0=\frac{2 \pi k_i}{\mathrm{K_0} \lambda_i},
	\end{equation}
	for some integers $k_i$ with $i=1, ..., n$.\\
	
	In \cite{MF}, Fischer introduced a family of Lie groups named $Osc_n(\omega_r, B_r)$ defined by an element $r=(r_1, ..., r_n) \in \mathbb{N}^n$ such that $r_i | r_{i+1}$. Denote by $\omega_r(u,v):=u^TN_{-r}v$ the symplectic form on $\R^{2n}$ and by $B_r \in GL(2n, \R)$ the linear transformation satisfying
	
	\begin{itemize}
		\item $\omega_r(B_r.,.)$ is symmetric and negative definite
		\item $e^{B_r} \in SL(2n,\Z)$.
	\end{itemize}
	
	The group operation for $Osc_n(\omega_r, B_r)$ with base on the manifold $\R \oplus \R^{2n} \oplus \R$ is given by
	
	\begin{equation}
		(z_1,v_1,t_1) . (z_2,v_2,t_2)=(z_1+z_2+\frac{1}{2}v_1^{T}N_{-r} e^{t_1 B_r}v_2,v_1+e^{t_1 B_r}v_2,t_1+t_2).
	\end{equation}
	
	Let $L(\xi_0)$ be the subgroups generated by $$\{ (1,0,0),(0,e_i,0),(0,\xi_0, 1) \}$$ where $\xi_0$ is an element in $\R^{2n}$ such that the above subgroup is a lattice. 
	
In particular, according to Example 3.1 of \cite{MF}, the element $\xi_0$ verifies the following condition 
	\begin{equation}\label{xi-condition}
		(\omega_r(\xi_0, e^{B_r}e_i), e^{B_r} e_i, 0) \in \,\, <\{ (1,0,0),(0,e_i,0) \}>
	\end{equation}

We assert that  the lattices $L(\xi_0)$ of $Osc_n(w_r, B_r)$ can be associated to lattices of $\oscn$. In fact, for every lattice $\Gamma$ of $\oscn$ there exists a group $Osc_n(\omega_r, B_r)$, $\xi_0 \in \R^{2n}$ and an isomorphism $\Phi: \oscn \rightarrow Osc_n(\omega_r, B_r)$ such that $\Phi(\Gamma) = L(\xi_0)$ (see Theorem 5 of \cite{MF}). 	
	The explicit definition of $\Phi$ can be found in the proof of the mentioned theorem. Moreover, the following property of this isomorphism holds:
	
	\begin{equation} \label{condition-exp}
		\Phi^{-1}(z,0,t) = (w z, 0, \widetilde{t_0} t ) \mbox{ whenever } e^{t B_r} = Id,
		\end{equation}
    where $\widetilde{t_0}$ is either $\frac{1}{t_0}$ or $-\frac{1}{t_0}$.

	Additionally it is shown that $B_r := \pm t_0 S N_{\lambda} S^{-1}$ \footnote{this follows by  noticing that $\oscn = Osc_n(w_{1}, N_{\lambda})$.}, for some invertible matrix $S$. 
	
	\begin{lem}\cite{MF}\label{oscilador-elementos}  
		Let $\Gamma$ be any lattice of $\oscn$. Then:
		\begin{enumerate}
			\item There always exists \, $w \neq 0 \in \R$ such that $(w,0,0) \in \Gamma$.
			\item If $K_0=1$, implicitly defined in equation \ref{oscilator-N}, then there exists an element in $\Gamma$ of the form $\gamma = (z, 0, t)$, where $z$ and $t$ are non-zero.
		\end{enumerate}
	\end{lem}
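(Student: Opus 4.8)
The first assertion will follow at once from the isomorphism $\Phi$. Since $(1,0,0)\in L(\xi_0)=\Phi(\Gamma)$ and $e^{0\cdot B_r}=Id$, property \eqref{condition-exp} gives $\Phi^{-1}(1,0,0)=(w,0,0)\in\Gamma$; because $\Phi^{-1}$ is an isomorphism and $(1,0,0)\neq e$, one must have $w\neq0$. I will also record the consequence that the central elements of $\Gamma$ form a rank-one lattice $\Gamma\cap\{(z,0,0)\}=w'\Z$ with $w'\neq0$ and $w\in w'\Z$.

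For the second assertion I assume $K_0=1$, so that $R(t_0)=e^{t_0N_\lambda}=Id$. Since $t_0\in\mathrm{T}(\Gamma)$, I may fix an element $g=(z_1,u_1,t_0)\in\Gamma$. Using $R(t_0)=Id$ and the antisymmetry of $J$ (so that $u^\tau Ju=0$ for all $u$), a short induction in the group law of $\oscn$ shows $g^{k}=(kz_1,ku_1,kt_0)$. The strategy is to cancel the $v$-component of a suitable power $g^{k}$ by right-multiplying by an element of $\Gamma$ lying in the $t=0$ slice; this is possible exactly when $ku_1$ belongs to the projection $L\subset\R^{2n}$ of $\Gamma\cap\{t=0\}$ onto the $v$-coordinate.

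The crux, and what I expect to be the main obstacle, is to prove that $u_1$ is rational over $L$, i.e. that $Nu_1\in L$ for some $N\in\NN$. First, cocompactness forces $\Gamma\cap\{t=0\}$ to be a lattice in the Heisenberg subgroup $\{(z,v,0)\}$; hence $L$ has full rank $2n$ and, reading off the commutators in this Heisenberg lattice, the symplectic form $(u,v)\mapsto u^\tau Jv$ takes values in $w'\Z$ on $L\times L$. Consequently $L$ sits with finite index $N=[L^\sharp:L]$ inside the commensurable lattice $L^\sharp=\{u\in\R^{2n}:u^\tau Jv\in w'\Z \text{ for all } v\in L\}$. Second, for any $(a,b,0)\in\Gamma$ a direct computation using $R(t_0)=Id$ gives the central element $g\,(a,b,0)\,g^{-1}\,(a,b,0)^{-1}=(u_1^\tau Jb,0,0)\in\Gamma$, so $u_1^\tau Jb\in w'\Z$ for every $b\in L$, i.e. $u_1\in L^\sharp$. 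Combining the two yields $Nu_1\in L$. This is where the lattice hypothesis and the integrality of the symplectic structure genuinely enter, and handling the finite-index bookkeeping carefully will be the delicate point.

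To conclude, I pick $(a,-Nu_1,0)\in\Gamma$ (possible since $-Nu_1\in L$) and compute, using $R(Nt_0)=R(t_0)^{N}=Id$ and $u^\tau Ju=0$,
\[
g^{N}\cdot(a,-Nu_1,0)=(Nz_1+a,\,0,\,Nt_0)\in\Gamma .
\]
Its $t$-coordinate $Nt_0$ is nonzero. Since the admissible $a$ range over a full coset of the central lattice $w'\Z$, the value $Nz_1+a$ runs over an infinite coset and can be chosen nonzero. This produces an element $\gamma=(z,0,t)\in\Gamma$ with $z,t\neq0$, as required.
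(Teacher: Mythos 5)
Your part (1) coincides with the paper's proof: both simply read off $\Phi^{-1}(1,0,0)=(w,0,0)\in\Gamma$ from property \eqref{condition-exp} with $t=0$, and injectivity of $\Phi^{-1}$ gives $w\neq 0$. For part (2), however, you take a genuinely different route, and your argument is correct. The paper stays entirely inside Fischer's normal form: from $e^{B_r}=Se^{t_0N_\lambda}S^{-1}=Id$, the divisibility $r_i\mid r_{i+1}$ and condition \eqref{xi-condition}, it extracts the explicit rational shape of $\xi_0$, raises $(0,\xi_0,1)$ to the power $r_1k_2\cdots k_n$ to land in $\Q\times\Z^{2n+1}$, clears the $\R^{2n}$-coordinates using the generators $(0,\pm e_i,0)$ and $(\pm 1,0,0)$ of $L(\xi_0)$, and only then transports the resulting element $(1,0,k)$ back through $\Phi^{-1}$. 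You instead argue intrinsically in $\oscn$, never touching the normal form of $\xi_0$: I checked your two commutator identities against the group law, namely $(a,b,0)(a',b',0)(a,b,0)^{-1}(a',b',0)^{-1}=(b^{\tau}Jb',0,0)$ and, using $R(t_0)=Id$, $g(a,b,0)g^{-1}(a,b,0)^{-1}=(u_1^{\tau}Jb,0,0)$, and both are right; together with $g^k=(kz_1,ku_1,kt_0)$ they give exactly the integrality $\omega(L\times L)\subseteq w'\Z$ and $u_1\in L^\sharp$, whence $Nu_1\in L$ and $g^N\cdot(a,-Nu_1,0)=(Nz_1+a,0,Nt_0)$, with $a$ adjustable along a coset of $w'\Z$ so that $Nz_1+a\neq 0$. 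What each approach buys: the paper's proof is parasitic on the classification in \cite{MF} (the presentation of $L(\xi_0)$ and the properties of $\Phi$), while yours is conceptually self-contained, exposes the integral symplectic structure on the Heisenberg part of the lattice, and in fact could dispense with $\Phi$ altogether even for the central element, since nondegeneracy of $\omega$ on the full-rank lattice $L$ already produces a nonzero central commutator. One caveat you should make explicit: your claim that $\Gamma\cap\{t=0\}$ is a lattice in the Heisenberg subgroup, which gives $L$ full rank $2n$, is not a formal consequence of cocompactness alone; it is Mostow's theorem that a lattice in a simply connected solvable Lie group meets the nilradical in a lattice (see \cite{Ra}, already in the bibliography). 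This full-rank statement is genuinely needed, since otherwise $L^\sharp$ contains a linear subspace, the index $[L^\sharp:L]$ is infinite, and the step $Nu_1\in L$ fails; with the citation added, your proof is complete.
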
 
	
	\begin{proof}
		
		Since $\Gamma = \Phi^{-1}(L(\xi_0))$, then $\Phi^{-1}(1,0,0) = (w,0,0) \in \Gamma$, according to (\ref{condition-exp}), with $w \neq 0 $; this proves the first part of the lemma.
		
		The second part is proved noticing first that the condition $t_0 = \frac{2 \pi k_i}{\lambda_i}$ corresponds to lattices such that $e^{tN_\lambda} = Id$ for any $t \in \mathrm{T}(\Gamma)$, therefore $$e^{B_r} = Se^{t_0N_\lambda}S^{-1}=Id.$$
		
		The latter equation, together with the fact that $r_i | r_{i+1}$ applied in Condition (\ref{xi-condition}) gives the following property: $$\xi_0=(\frac{z_1}{r_1},\frac{z_2}{r_1}, \frac{z_3}{r_1 k_2}, \frac{z_4}{r_1 k_2}, ..., \frac{z_{2n-1}}{r_1 k_2 k_3 ... k_n}, \frac{z_{2n}}{r_1 k_2 k_3 ... k_n} ), \quad \mbox{for some } z_i \in \mathbb{N},$$ and it can be verified that $$(0,\xi_0,1)^{r_1 k_2 k_3 ... k_n} \in \Q \times \Z^{2n+1}. $$
		
		Since the $2n$-components in $\mathbb R^{2n}$ of the result are integers, every element of this form  can be multiplied conveniently by $(0, \pm e_i, 0) \in L(\xi_0)$ to obtain $(q_1, 0, t_1) \in \Gamma$, for some $q_x \in \Q$. Then for some integer $y_1$, $(q_1, 0, t_1)^{y_1} = (y_1 q_1, 0, y_1 t_1) \in \Z^{2n+1}$. 
		
		Finally, since $(\pm 1,0,0) \, L(\xi_0)$, after convenient multiplications one can construct an element $(1,0,k)$ in $L(\xi_0)$ such that, $\Phi^{-1}(1,0,k) = (w,0,\widetilde{t_0} k)$. 
		\end{proof}

	\begin{obs}\label{obs-osc}
		Let $\Gamma$ be a lattice of the oscillator group $\oscn$ with $t_0=\frac{2\pi k_i}{\mathrm{K_0} \lambda_i}$ as in Equation \eqref{oscilator-N}, notice that:
		\begin{itemize}
			\item The lightlike geodesics in $\oscn$ with $a=0$ \eqref{geo2}, verify $b_j=c_j=0$ for all $j=1,...,n$. Consequently, they take the form $ \alpha_d(s)=(ds,0,0)$ and intersect $\Gamma$ because there exists $w > 0$ $(w,0,0) \in \Gamma$ according to last Lemma. This means that  $\alpha(\tilde{s})=(w,0,0)$ for some $\tilde{s} > 0$.
			\item The lightlike geodesics with $a \neq 0$ verify $\alpha(\frac{\mathrm{K_0} t_0}{a}) = (0,0,\mathrm{K_0} t_0)$, see the expressions \eqref{geo_osc_1}. If the lattice  $\Gamma$ contains an element of the form $(0,0,\hat{t})$ with $\hat{t}=p t_0$ for some $p \in \mathbb{Z}$, then $$\alpha(p \mathrm{K_0} t_0) = (\alpha(\mathrm{K_0} t_0))^p = (0,0,\hat{t})^p \in \Gamma.$$ 
		\end{itemize}
	\end{obs}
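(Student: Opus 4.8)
The plan is to treat the two bullet points separately, the goal in each case being to exhibit a positive parameter $\tilde s$ at which the lightlike one-parameter subgroup $\alpha$ meets the lattice $\Gamma$; once this is done, item (\ref{punto4}) together with the quoted lemma of \cite{BOV} immediately upgrades the projected curve $\pi\circ\alpha$ to a periodic geodesic of $M$.

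For the case $a=0$ I would start from the classification of lightlike initial velocities, which reads $2ad+\sum_{k=1}^n(b_k^2+c_k^2)/\lambda_k=0$. Setting $a=0$ forces $\sum_{k=1}^n(b_k^2+c_k^2)/\lambda_k=0$, and since every $\lambda_k>0$ each summand is nonnegative; hence $b_j=c_j=0$ for all $j$. Feeding this back into \eqref{geo2} collapses the geodesic to $\alpha_d(s)=(ds,0,0)$ with $d\neq 0$ (otherwise $X=0$). By part (1) of Lemma \ref{oscilador-elementos} there is $w\neq 0$ with $(w,0,0)\in\Gamma$, and replacing $w$ by $-w$ if necessary (the inverse $(-w,0,0)$ also lies in $\Gamma$) we may assume $w/d>0$; then $\alpha_d(w/d)=(w,0,0)\in\Gamma$, which is the desired meeting with $\tilde s=w/d>0$.

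For the case $a\neq 0$ the heart of the argument is a single well-chosen evaluation. I would substitute $s=\mathrm{K_0}t_0/a$ into \eqref{geo_osc_1} and use \eqref{oscilator-N} in the form $\lambda_k \mathrm{K_0}t_0=2\pi k_k$, so that every argument $\lambda_k a s=2\pi k_k$ is an integer multiple of $2\pi$. This makes all the sines vanish and all the cosines equal to $1$, whence $x_j(s)=y_j(s)=0$ for every $j$ and the oscillatory part of $z(s)$ disappears. The remaining linear term of $z(s)$ has coefficient $d+\tfrac{1}{2a}\sum_k(b_k^2+c_k^2)/\lambda_k$, which is exactly zero by the lightlike condition; since $t(s)=as$, we conclude $\alpha(\mathrm{K_0}t_0/a)=(0,0,\mathrm{K_0}t_0)$. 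Finally, using that $\alpha$ is a one-parameter subgroup and that the subset $\{(0,0,t)\}$ is an abelian subgroup on which the product merely adds the last coordinate, the hypothesis $(0,0,pt_0)\in\Gamma$ gives $\alpha(p\,\mathrm{K_0}t_0/a)=(0,0,p\,\mathrm{K_0}t_0)=(0,0,pt_0)^{\mathrm{K_0}}\in\Gamma$, so $\alpha$ again meets $\Gamma$ at a positive parameter.

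I expect the main obstacle to be the $a\neq 0$ computation, specifically the simultaneous synchronization of all $n$ oscillatory channels: it is \eqref{oscilator-N} that lets one choose a single return time $\mathrm{K_0}t_0/a$ killing every $\sin(\lambda_k a s)$ at once, and it is the lightlike condition that is needed to cancel the linear $z$-drift. The genuinely nontrivial hypothesis is the existence of a lattice element $(0,0,pt_0)$ on the $t$-axis; whether such an element exists is dictated by $\Gamma$ (compare part (2) of Lemma \ref{oscilador-elementos}), and it is precisely this dependence that will make the periodicity of all lightlike geodesics a property of the lattice rather than of the group, foreshadowing Theorem \ref{teoremaoscilador}.
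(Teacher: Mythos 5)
Your proposal is correct and follows essentially the same route as the paper's inline justification: for $a=0$ the lightlike condition $2ad+\sum_k(b_k^2+c_k^2)/\lambda_k=0$ with $\lambda_k>0$ forces $b_j=c_j=0$ and the geodesic $(ds,0,0)$ meets the lattice element $(w,0,0)$ from Lemma \ref{oscilador-elementos}(1), while for $a\neq 0$ you evaluate at $s=\mathrm{K_0}t_0/a$, use \eqref{oscilator-N} to synchronize all oscillatory terms and the lightlike condition to cancel the linear $z$-drift, obtaining $\alpha(\mathrm{K_0}t_0/a)=(0,0,\mathrm{K_0}t_0)$, and then conclude with the one-parameter-subgroup property. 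In fact your bookkeeping is slightly cleaner than the paper's: you adjust the sign of $w$ against that of $d$ to get a positive parameter $\tilde s=w/d$, and your identity $\alpha(p\,\mathrm{K_0}t_0/a)=(0,0,p\,\mathrm{K_0}t_0)=(0,0,pt_0)^{\mathrm{K_0}}$ corrects the paper's evaluation $\alpha(p\,\mathrm{K_0}t_0)$ (missing the factor $1/a$) and its exponent in $(0,0,\hat t)^p$, without changing the argument.
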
	
		
		\begin{thm}\label{teoremaoscilador}
			Let $\Gamma$ be a cocompact lattice of $\oscn$, and consider the compact Lorentzian manifold $M=\oscn/\Gamma$. Then only one of the following situations occurs
			\begin{itemize}
				\item either $\Gamma$ contains an element of the form $(0,0,t_0),$ for some $t_0\in \RR, t_0 \neq 0$ and in this situation every lightlike geodesic of $M$ is closed;
				\item or, for any $t \neq 0$, one has $(0,0,t) \notin \Gamma$. In this case,   at every point in $M$ there is exactly one direction for which all lightlike geodesics of $M$ are closed and for any other direction they are non-closed. This direction is spanned by the lightlike element $Z \in \mathfrak{osc}_n(\lambda_1, ..., \lambda_n)$.		
			\end{itemize}
			
		\end{thm}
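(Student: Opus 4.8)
The plan is to reduce everything to geodesics based at $o=\pi(e)$ and then exploit the explicit description of lightlike geodesics in \eqref{geo_osc_1}--\eqref{geo2} together with the computation recorded in Observation \ref{obs-osc}. Since each $\tau_g$ is an isometry carrying the geodesic through $o$ with velocity $X$ to the geodesic through $g\Gamma$ with velocity $d\tau_g(X)$ (item (\ref{punto1})), and isometries preserve both closedness and the causal character (items (\ref{punto2})--(\ref{punto1})), it suffices to analyse the lightlike geodesics through $o$. For these, by item (\ref{punto4}) and the Lemma on non-simple one-parameter subgroups, $\pi\circ\alpha$ is closed if and only if $\alpha(s)\in\Gamma$ for some $s>0$; equivalently the set $S:=\alpha^{-1}(\Gamma)$, which is a closed subgroup of $\R$ because $\alpha$ is a one-parameter subgroup and $\Gamma$ a closed subgroup, is nontrivial. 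I then separate the lightlike initial velocities into the two families of \eqref{geo_osc_1}--\eqref{geo2}: when $a=0$ the lightlike condition $\sum_k(b_k^2+c_k^2)/\lambda_k=0$ forces $b_j=c_j=0$, so $X=dZ$ and $\alpha_d(s)=(ds,0,0)$; when $a\neq 0$ the velocity has a genuine $T$-component.

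The first elementary observation, valid in either case of the dichotomy, is that the $Z$-direction always closes: since $(w,0,0)\in\Gamma$ for some $w\neq 0$ by Lemma \ref{oscilador-elementos}(1), the geodesic $\alpha_d(s)=(ds,0,0)$ meets $\Gamma$ at $\alpha_d(w/d)=(w,0,0)$, so it is non-simple, hence closed. Thus the entire content of the theorem concerns the geodesics with $a\neq 0$, for which the key input is $\alpha(K_0 t_0/a)=(0,0,K_0 t_0)$ from Observation \ref{obs-osc}; taking $m$-th powers in the group yields $\alpha(mK_0 t_0/a)=(0,0,mK_0 t_0)$ for every $m\in\Z$.

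Now I would split according to the dichotomy, which is exhaustive and exclusive by construction. Suppose first that $(0,0,\tau)\in\Gamma$ for some $\tau\neq 0$; since $\tau\in\mathrm{T}(\Gamma)$ we have $\tau=p\,t_0$ with $p\in\Z\setminus\{0\}$. Then $(0,0,\tau)^{K_0}=(0,0,K_0 p\,t_0)\in\Gamma$, and by the previous paragraph this equals $\alpha(K_0 p\,t_0/a)$; hence $S$ is nontrivial, so every lightlike geodesic with $a\neq 0$ closes as well, and combined with the $Z$-direction this makes every lightlike geodesic of $M$ closed. Suppose instead that $(0,0,t)\notin\Gamma$ for every $t\neq 0$. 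The $Z$-direction still closes as above, and for the remaining directions I argue by contradiction: if some lightlike geodesic with $a\neq 0$ closed, choose $c>0$ with $\alpha(c)\in\Gamma$. Its $T$-component $ac$ lies in $\mathrm{T}(\Gamma)$, so $ac=m\,t_0$ with $m\neq 0$, whence $K_0 c=mK_0 t_0/a$ and
\[ \alpha(K_0 c)=\alpha(c)^{K_0}\in\Gamma \quad\text{and}\quad \alpha(K_0 c)=\alpha(mK_0 t_0/a)=(0,0,mK_0 t_0). \]
This gives $(0,0,mK_0 t_0)\in\Gamma$ with $mK_0 t_0\neq 0$, contradicting the hypothesis. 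Hence no lightlike direction other than $\R Z$ closes at $o$; transporting by the isometries $\tau_g$, and using that $Z$ is central, so that its left-invariant extension is bi-invariant and determines the same direction at every point, at each point of $M$ exactly the $Z$-direction is closed.

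I expect the main obstacle to lie not in the case split, which is short once the bookkeeping is in place, but in the exact computation $\alpha(K_0 t_0/a)=(0,0,K_0 t_0)$ underlying Observation \ref{obs-osc}: the $\R^{2n}$-component of \eqref{geo_osc_1} returns to $0$ precisely because $\lambda_j K_0 t_0\in 2\pi\Z$ by \eqref{oscilator-N}, and one must also check the vanishing of the $z$-component. The other delicate point is that hitting $\Gamma$ is a \emph{subgroup} condition on the time parameter, which is exactly what lets one pass to $K_0$-th powers and produce a central element $(0,0,mK_0 t_0)$. Finally, one should verify carefully that $a=0$ lightlike velocities are forced to be multiples of $Z$, so that the distinguished direction is literally the central line $\R Z$ and is independent of the base point.
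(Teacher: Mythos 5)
Your proof is correct and follows essentially the same route as the paper's: reduction to geodesics at $o=\pi(e)$, the always-closed $Z$-direction via $(w,0,0)\in\Gamma$ from Lemma \ref{oscilador-elementos}, and for lightlike geodesics with $a\neq 0$ the key identity $\alpha(K_0 t_0/a)=(0,0,K_0 t_0)$ of Observation \ref{obs-osc}, combined with passing to $K_0$-th powers to either produce the closing element $(0,0,mK_0t_0)\in\Gamma$ or to contradict the hypothesis that no such element exists. If anything, your write-up is slightly more careful than the paper's, since you make both directions of the dichotomy explicit and justify the transport to arbitrary base points via $\tau_g$ and the centrality of $Z$.
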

		
		\begin{proof}
			Recall that it suffices to study the geodesics starting at $o:=\pi(e)$ and that every geodesic $\hat{\alpha}$ is the projection of some geodesic, $\alpha$, on $\oscn$: $\hat{\alpha}=\pi(\alpha)$ with $\alpha(0)=e$. Also, $\hat{\alpha}$ is closed in $M$ if $\alpha(s) \in \Gamma$ for some $s>0$.
			
			As observed in \ref{obs-osc}, all lightlike geodesics of the form $\pi((ds,0,0))$ are closed on the compact space $M$, and so geodesics with  this direction will  always be closed. Therefore, to prove the theorem it may hold that any lightlike geodesic with different starting direction is either closed or simple. \footnote{{\color{red} esta bien?...}}
			
			Let $\alpha$ be a lightlike geodesic with a direction $X$, wich is linear independent with $Z$, and suppose that  it is closed. This means that there exists some $\gamma=(z,u,t) \in \Gamma$ for which $\alpha(s)=\gamma$ for some $s>0$. Since the curve $\alpha$ is a one-parameter subgroup of $\oscn$, then for any integer $m$: $\alpha(m s)=\gamma^m$, which is an element of $\Gamma$. Recall also that since $t \in \mathrm{T}(\Gamma)$ it is of the form $r t_0$ for some integer $r$ and $t_0=\frac{2 \pi k_i}{K_0 \lambda_i}$. Finally, since $s=\frac{t}{a}$, one can compute that $\gamma^{K_0} = \alpha(K_0 \frac{t}{a}) = (0,0,K_0 t) = (0,0,K_0 r t_0)$, and therefore, since $K_0 r$ is an integer, this element is in the lattice and every lightlike geodesic of $M$ is closed.
			
			In conclusion, when an element of the form $(0,0,k t_0)$ is in the lattice every lightlike geodesic of $M$ is closed, otherwise $\hat{\alpha}_d(s)=\pi(ds,0,0)$ are the only closed  geodesics at $\pi(e)$.			
		\end{proof}

		\begin{exa}\label{Lattice4} Both situations stated in the above theorem are possible. Take for instance the three families of cocompact lattices constructed in \cite{BOV} for  $Osc_1(1)$,
			\begin{eqnarray*} \label{geodlight}
				\Lambda_{n,0}&=&\frac{1}{2n}\Z \times \Z \times \Z \times 2 \pi \Z,\\
				\Lambda_{n,\pi}&=&\frac{1}{2n}\Z \times \Z \times \Z \times \pi \Z,\\
				\Lambda_{n,\frac{\pi}{2}}&=&\frac{1}{2n}\Z \times \Z \times \Z \times \frac{\pi}{2} \Z,
			\end{eqnarray*}
			where $n \in \mathbb{N}$, for which the authors proved that all lightlike geodesics of $M_{n,0}=Osc_1(1)/\Lambda_{n,0}, M_{n,\pi}=Osc_1(1)/\Lambda_{n,\pi}$ and $M_{n,\pi/2}=Osc_1(1)/\Lambda_{n,\pi/2}$ are closed. However other  lattices can be obtained by noticing that
			\begin{eqnarray*}
				\phi_m &:& Osc_1(1) \rightarrow Osc_1(1)\\
				\phi_m(z,x,y,t)&=&(z+mt,x,y,t) \textrm{,    $m \in \R$}
			\end{eqnarray*}
			are  automorphisms of $Osc_1(1)$. So, the  lattices $\phi(\Lambda_{n,\bullet})$ most likely do not contain an element of the form $(0,0, t)$. For example, given an integer $p \neq 0$, the lattice $\phi_p(\Lambda_n,0)$ does not contain such element since $\frac{a}{2 n}+ p \, 2 \pi b = 0$ has no solution for integers $a,b$. Thus, for these lattices not every lightlike geodesic is closed. \\
			
		\end{exa}
  
        \begin{rem}
            Each lattice, $L(\xi_0)$, in Table 6 of \cite{MF} corresponds to a lattice $\Gamma$ of $Osc_1(1)$ where all the lightlike geodesics on $Osc_1(1)/\Gamma$ are closed. Such correspondence is given by the inverse of the following group isomorphism \cite{MF}
              \begin{eqnarray}
                  \phi: Osc_1(1) \rightarrow Osc_1(w_r, B_r) \\
                  (z,v,t) \mapsto (r z, T_{x,y} v, t/\lambda),
              \end{eqnarray}
              with $$T_{x,y} = \left( \begin{matrix}
				-\sqrt{y} & -\frac{x}{\sqrt{y}} \\
				0 & -\frac{1}{\sqrt{y}} \\
			\end{matrix} \right).$$

   To see  the lightlike geodesics on $Osc_1(1)/\Gamma$ for lattices $\Gamma := \phi^{-1}(L(\xi_0))$, one can first notice the following property $$(0,0,\lambda k) = \phi^{-1}(0,0,k), \forall k \in \Z. $$ Therefore if $(0,0,k)$ in $L(\xi_0)$ the observation will be proved, according to theorem (3.4).

   Prove that $(0,0,k) \in L(\xi_0)$ for some $k \in \ZZ - \{ 0 \}$. Notice first that since $\xi_0 \in \Q^2$ \cite{MF}, and so $(0,\xi_0,1)^{y_1}$ for any $y_1 \in \ZZ$. Additionaly, since there exist $N$ such that $e^{N B_r} = Id$ (see derivation of Equation \eqref{oscilator-N}), one has  $(0,\xi_0,1)^{N y_1 n_1} = (x_2,v_2,t_2) \in \ZZ^4$ for some $n_1 \in \NN$. Finally $(x_2, v_2, t_2) . (0,1,0)^{-v_2} . (1, 0, 0)^{-x_1} = (0,0,t_2)$.
        \end{rem}
		
		To  study  timelike and spacelike geodesics on the compact spaces, one needs to  consider the geodesics on $\oscn$ starting at the identity element as in (\ref{geo_osc_1}).
		
		 Let  $(d,b_j,c_j,a)\in \mathfrak{osc}_n$ be the initial velocity of a geodesic where $a\neq0$ and let $\hat{\gamma}=(\hat{z}, \hat{\eta}, \hat{t})$ be an element of the lattice $\Gamma$. Assume that $\alpha(\hat{t}/a)=\gamma$ with $\hat{t}/a > 0$. In this situation, it holds		
		\begin{equation}\label{oscilador_geos_1}
			\left( \begin{matrix}
				\sin{\lambda_j \hat{t}} & \cos{\lambda_j \hat{t}} -1 \\
				1 - \cos{\lambda_j \hat{t}} & \sin{\lambda_j \hat{t}} \\
			\end{matrix} \right)
			\left( \begin{matrix}
				b_j \\
				c_j \\
			\end{matrix} \right)=
			\left( \begin{matrix}
				\hat{b_j} \\
				\hat{c_j}
			\end{matrix} \right).
		\end{equation}
		
		\begin{equation}\label{oscilador_geos_2}
			\hat{z} =  \left(d + \frac{1}{2 a} \sum_{k=1}^{n} \frac{ b_{k}^{2}+c_k^{2}}{\lambda_k}\right)\frac{\hat{t}}{a}- \frac{1}{2 a^{2}}  \sum_{k=1}^{n} \frac{b_{j}^{2}+c_j^2}{\lambda_k^{2}} \sin(\lambda_k \hat{t}).
		\end{equation}
		
		These expressions are used to prove the first part of the following theorem.
		
			\begin{thm}\label{othergeodesics}
			For any lattice $\Gamma$ of $\oscn$ there are both closed and open timelike and spacelike geodesics on the compact space $\oscn / \Gamma$.
		\end{thm}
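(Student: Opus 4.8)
The plan is to treat the four assertions (closed/open, timelike/spacelike) separately, reducing throughout to geodesics $\alpha$ through $o=\pi(e)$, so that $\pi\circ\alpha$ is closed precisely when $\alpha(s)\in\Gamma$ for some $s>0$. The bookkeeping rests on the causal identity $\la X,X\ra=2ad+\sum_k(b_k^2+c_k^2)/\lambda_k$ for $X=dZ+\sum_k(b_kX_k+c_kY_k)+aT$; in particular when $a=0$ this is $\ge 0$, so a timelike geodesic forces $a\neq 0$.

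For the two spacelike statements I would use the ``flat'' geodesics \eqref{geo2} with $a=0$, namely $\alpha(s)=(ds,(b_js,c_js),0)$, which are spacelike as soon as $(b_j,c_j)\neq 0$. For a closed one, pick any $\gamma=(\hat z,\hat\eta,0)$ in the Heisenberg sublattice $\Gamma\cap\{t=0\}$ with $\hat\eta\neq 0$ --- such $\gamma$ exists because that sublattice is cocompact in the $(2n+1)$-dimensional Heisenberg group and so cannot be central --- and run the $a=0$ geodesic with spatial velocity $\hat\eta$ and $d=\hat z$, which meets $\gamma$ at $s=1$. For an open one, choose the spatial direction $v$ to avoid the countable set of directions spanned by the (discrete, hence countable) spatial parts of $\Gamma\cap\{t=0\}$; then $\alpha(s)=(0,sv,0)$ never returns to $\Gamma$.

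The heart of the matter is the timelike case, for which I would isolate the following \emph{key lemma}: every lattice $\Gamma$ of $\oscn$ contains an element $(\hat z,0,\hat t)$ with $\hat t\neq 0$, and by multiplying by $(w,0,0)\in\Gamma$ (Lemma \ref{oscilador-elementos}) one may arrange either sign of $\hat z\hat t$. Granting this, the geodesics in the $Z$--$T$ plane do everything: for $X=dZ+aT$ one has $\alpha(s)=(ds,0,as)$ (case $a\neq0$ of \eqref{geo_osc_1}) with $\la X,X\ra=2ad$, and choosing $a=\hat t/s_0,\ d=\hat z/s_0$ forces $\alpha(s_0)=(\hat z,0,\hat t)\in\Gamma$ while $\la X,X\ra=2\hat z\hat t/s_0^2$ has the sign of $\hat z\hat t$; taking $\hat z\hat t<0$ yields a closed timelike geodesic (and $\hat z\hat t>0$ re-proves the closed spacelike case). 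For an open timelike geodesic I would fix $a=1$ and vary $d<0$ in $\alpha_d(s)=(ds,0,s)$: closing requires $(d m t_0,0,mt_0)\in\Gamma$ for some $m>0$, and since $\{z:(z,0,mt_0)\in\Gamma\}$ is empty or a coset of the infinite cyclic group $\{z:(z,0,0)\in\Gamma\}$, only countably many $d$ close up, leaving uncountably many $d<0$ open.

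The main obstacle is exactly the key lemma, a strengthening of Lemma \ref{oscilador-elementos}(2) to arbitrary $K_0$. I expect the soft route (discreteness alone) to fail: in $\oscn/Z$ one can build discrete cocompact subgroups whose $T$-generator has an irrational translation part, meeting the $T$-axis trivially, so genuine arithmetic input is needed. That input is Fischer's normal form: writing $\Gamma=\Phi^{-1}(L(\xi_0))$ with $\xi_0\in\Q^{2n}$ and $e^{B_r}\in SL(2n,\ZZ)$ forced by \eqref{xi-condition}, I would take $g=(0,\xi_0,1)^{K_0M}$, whose spatial part is $M\big(\sum_{j=0}^{K_0-1}e^{jB_r}\big)\xi_0=MK_0P_1\xi_0$ with $P_1$ the spectral projection onto the fixed space of $e^{B_r}$; choosing $M$ to clear denominators makes this an integer vector, which I cancel using the generators $(0,e_i,0)$ to get $(\ast,0,K_0M)\in L(\xi_0)$. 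Since $e^{K_0MB_r}=\mathrm{Id}$, property \eqref{condition-exp} gives $\Phi^{-1}(\ast,0,K_0M)=(\hat z,0,\hat t)\in\Gamma$ with $\hat t\neq 0$. The one delicate point to check is the rationality of $P_1$ (that the eigenvalue-$1$ eigenspace of the integer matrix $e^{B_r}$ is defined over $\Q$), after which the denominator-clearing and cancellation are routine.
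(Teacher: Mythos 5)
Your proposal is correct, but it takes a genuinely different route from the paper's proof. For the closed geodesics the paper splits on the integer $\mathrm{K_0}$ of \eqref{oscilator-N}: when $\mathrm{K_0}=1$ it uses precisely your $Z$--$T$-plane geodesics, with the element $(z,0,t)\in\Gamma$, $zt\neq 0$, supplied by Lemma \ref{oscilador-elementos}(2) and the central translations $(w,0,0)^m$ tuning the sign of $2ad$; when $\mathrm{K_0}>1$ it \emph{avoids} your key lemma altogether by targeting lattice elements $(m\omega+x,u,(\mathrm{K_0}-1)t_0)$ with arbitrary spatial part $u$, solving \eqref{oscilador_geos_1} for $(b_j,c_j)$ --- possible because the rotation-difference matrix is invertible exactly when $\mathrm{K_0}>1$ --- and then solving \eqref{oscilador_geos_2} for $d_m$, again using $m$ to flip the causal character, so both timelike and spacelike closed geodesics come out of one family. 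Your alternative is to prove the strengthened lemma for all $\mathrm{K_0}$ (some $(\hat z,0,\hat t)\in\Gamma$ with $\hat t\neq 0$) via Fischer's normal form, and your sketch does go through; note however that the point you flag as delicate is automatic, since $P_1=\tfrac{1}{K_0}\sum_{j=0}^{K_0-1}(e^{B_r})^j$ is a rational polynomial in the integer matrix $e^{B_r}\in SL(2n,\ZZ)$ and hence rational --- the input actually needing care is $\xi_0\in\Q^{2n}$ for a general lattice, which does follow from \eqref{xi-condition} because $\{e^{B_r}e_i\}$ is a $\Q$-basis and $\omega_r$ has an invertible rational matrix. Your closed spacelike construction via the $a=0$ geodesics \eqref{geo2} imports Mostow's theorem (that $\Gamma$ meets the Heisenberg nilradical in a lattice, cf. \cite{Ra}), standard but external to the paper, whereas the paper needs no such input. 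For the open geodesics the paper argues by pigeonhole on an interval $I_d$ of initial $d$-values, producing a convergent sequence of distinct lattice points $\alpha_{d'_n}(r_\infty t_0/a)$ that contradicts discreteness; your countability argument in the $Z$--$T$ family (closing times lie in $t_0\ZZ_{>0}$, and for each $m$ the admissible $z$-values form a coset of the discrete group $\Gamma\cap Z(G)$, so only countably many $d$ close up) is cleaner and more self-contained, and taking $d>0$ it also yields the open spacelike case, making your separate direction-avoidance argument for $a=0$ redundant. In sum: what your route buys is a sharper structural fact about lattices of $\oscn$ and a tidier non-closedness argument; what the paper's route buys is independence from Mostow and from the strengthened lemma, at the cost of the case analysis on $\mathrm{K_0}$.
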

		
		\begin{proof}
						1) Existence of closed timelike and spacelike geodesics: As seen above,  having closed timelike or spacelike geodesics of $\oscn/\Gamma$ is equivalent to have timelike or spacelike geodesics of $\oscn$ that intersect the lattice $\Gamma$ at some positive time.
			
			Take an element $(w,0,0) \in \Gamma$ with $w>0$ (see Lemma \ref{oscilador-elementos}), and consider any element $\gamma=(z, \eta, t) \in \Gamma$. Thus,  by multiplying those elements one gets  
			
			$(w,0,0)^m.({z}, {\eta},{t})=(m w+{z},{\eta}, {t})$ for any $m \in \mathbb{Z}$. 
			
			Consider now the following two posibilities for $\mathrm{K_0}$ (Equation \eqref{oscilator-N}):
			
			\begin{itemize}
				\item Case $\mathrm{K_0} = 1$. This is the case of the second item of Lemma (\ref{oscilador-elementos}). Thus,  there exists $\gamma = (z,0,t) \in \Gamma$ with $z t \neq 0$. Let $\gamma_m := (m w+z, 0, t)=(w,0,0)^m.(z,0,t)$ and consider the geodesic $\alpha_m$ with initial velocity $X =  d \ Z + \sum_j (b_j X_j + c_j Y_j) + a T$ satisfying 
				$$a=t,\quad b_j=c_j=0, \quad d_m = m w + z.$$ 
				 It follows from equations above that $\alpha_m(1)=\gamma_m$ (in fact, for $\mathrm{K_0}=1$ the matrix in \eqref{oscilador_geos_1} is trivial). Finally $\alpha_m$ is timelike or spacelike depending on  $\frac{mw+z}{t}$ is negative or positive respectively; and  any case can be achieved by choosing $m$ conveniently. 
				
				\item Case $\mathrm{K_0}>1$. Consider $\gamma=(x,u,(\mathrm{K_0}-1)t_0)$, and for $m\in \Z$ define 
				$$\gamma_m := {(m \omega + x, u, (\mathrm{K_0}-1) t_0)} = (w,0,0)^m.(x,u,(\mathrm{K_0}-1) t_0).$$
				 For every $\gamma_m$ the matrix in Equation \eqref{oscilador_geos_1} is non-singular (because if $\lambda_j (\mathrm{K_0}-1) t_0 = 2 \pi s_j$ for integers $s_j$ one gets $t_0 = \frac{2 \pi (\mathrm{K_0}-1)}{\lambda_j}$ meaning $\mathrm{K_0}=1$, which is a contradiction). Therefore Equation \eqref{oscilador_geos_1} gives unique solutions $b_j,c_j$, independent of $m$. Then setting $a=(\mathrm{K_0}-1) t_0$ and by solving Equation \eqref{oscilador_geos_2} for $d=d_m$ one gets parameters $a,b_j,c_j,d_m$ such that $\alpha_m(1) = \gamma_m$.     
				Finally these geodesics are closed in the quotient and are timelike or spacelike according to the expression  $$ 2 p t_0 (mw+x) - \sum_{k=1}^{n} \frac{b_j^2 + c_j^2}{\lambda_k^2}\sin(\lambda_k (K_0-1) t_0) $$ is negative or positive respectively. Both cases are achievable by choosing different values of $m$.
				
			\end{itemize}

			2) Existence of open timelike and open spacelike geodesics:
			
		Consider the elements of the lattice of the form  $\hat{\gamma}=(\hat{z},\hat{u}, p \mathrm{K_0} t_0) \in \Gamma$. Those elements can be obtained by considering the $\mathrm{K_0}$th power of  any element with non-null $t$-component. Let $\hat{s}$ such that $\alpha(\hat{s})=(z(\hat{s}),u(\hat{s}),t(\hat{s})) = \hat{\gamma}$, where $\alpha$ is a geodesic of $\oscn$ (with $a \neq 0$). Then it must be $t(\hat{s}) = a \hat{s} = p \mathrm{K_0} t_0$, which implies $u(\hat{s})=0$ and $z(\hat{s}) = (d + \frac{1}{2 a} \sum^n_{k=1} \frac{b_k^2+ c_k^2}{\lambda_k}) \frac{p \mathrm{K_0}t_0}{a}$, where $a, b_k, c_k, d$ define the initial velocity of $\alpha$. \\
			
			Let $X =  d' \ Z + \sum_j (b_j X_j + c_j Y_j) + a T$ de the initial condition of the geodesic $\alpha_{d'}$. For any $\varepsilon >0$ consider the interval  $I_d := [d, d+ \epsilon]$. Take  $d'\in I_d$ and assume that the geodesic $\alpha_{d'}$ intersects the lattice  at $s'$, say $\alpha_{d'}(s') \in \Gamma$. Then it must hold $t'(s') = r' t_0$ for some integer $r'$. Now, define a function $F: I_d \to \ZZ$, that sends $d'\to r'$. 
		Clearly there exists and element denoted by $r_{\infty}\in \ZZ$ such that  the  preimage  $F^{-1}(r_{\infty})$ is an infinite set. Since $F^{-1}(r_{\infty})\subset I_d$, then this set is bounded and it must contain a convergent sequence, namely $\{d'_n\}$.
			
			Take the elements in the lattice $\Gamma$ given by
			\begin{eqnarray*}
				\alpha_{d'_n}(\frac{r_{\infty} t_0}{a}) = ( (d'_n + \frac{1}{2 a} \sum_{k=1}^{n} \frac{ b_{k}^{2}+c_k^{2}}{\lambda_k})\frac{\hat{t}}{a}- \frac{1}{2 a^{2}} (  \sum_{k=1}^{n} \frac{b_{j}^{2}+c_j^2}{\lambda_k^{2}} \sin(\lambda_k \hat{t}) ), \\ 
				R_1(r_{\infty} t_0)\left( \begin{matrix}
					b_1 \\
					c_1 \\
				\end{matrix} \right),..., R_n(r_{\infty} t_0)\left( \begin{matrix}
					b_n \\
					c_n \\
				\end{matrix} \right), \\     
				r_\infty t_0 ),
			\end{eqnarray*}
			
			with $R_j(x) := \left( \begin{matrix}
				\sin{(\lambda_j x)} & \cos{(\lambda_j x)} -1 \\
				1 - \cos{(\lambda_j x)} & \sin{(\lambda_j x)} \\
			\end{matrix} \right)
			\left( \begin{matrix}
				b_j \\
				c_j \\
			\end{matrix} \right)$, see Equation \eqref{oscilador_geos_1}. Since $\{d'_n\}_n$ is convergent, the resulting sequence $\{ \alpha_{d'_n}(\frac{r_{\infty} t_0}{a}) \}_n$ also converges. This is a contradiction since $\Gamma$ is discrete.
			
		\end{proof}

	\subsection{Remarks}
	Consider the group $G=\oscn \times \R$. This Lie group is simply connected, has a bi-invariant Lorentzian metric and it admits cocompact lattices. However its Lie algebra is not indecomposable. This last fact affects the geometry of $M=G/\Gamma$, for $\Gamma$ a cocompact lattice. Take for instance the lightlike geodesics of $M$. Take a geodesic of $\R$ of the form $\gamma(t)=\beta t$, and let $\alpha$ be a geodesic of $\oscn$. Then the curve  $c(t)=(\alpha(t),\gamma(t))$ is lightlike on $M$ if the following equality holds
	\begin{equation}\label{remarkosc}
		0 = \, <\alpha'(0),\alpha'(0)> \, + \, r^2, 
	\end{equation}
	where $\la \,,\,\ra$ is the metric of the oscilator (\ref{metricosc}) at the identity. It follows that $\alpha$ must be a timelike geodesic of $\oscn$. Choose $\Lambda_{n,0}$  a lattice of $Osc_1(1)$, and $w \Z$ a lattice of $\R$, which is the case for any real $w \neq 0$. Thus,  $\Lambda_{n,0} \times w \Z$ is a lattice of $G=Osc_1(1) \times \R$.
	
	The lightlike condition in this case, explicitly Equation (\ref{remarkosc}), gives
	
	\begin{equation*}
		0 = \, 2 a d + \frac{b+c}{2a} + \, r^2. 
	\end{equation*}
	
	Should $c(t)$ be a closed lightlike geodesic of $G$ then there exists some  $s>0$, such that $\alpha(s) \in \Lambda_{n,0}$ and $r(s) \in w \Z$. From the equations for  $\alpha$ it follows that $s=\frac{2 \pi k}{a}$ for some $k \in \Z$ and $z(\frac{2 \pi k}{a})=(d+\frac{b+c}{2 a})(\frac{2 \pi k}{a})=\frac{m}{2n}$ for some $m \in \Z$. This can be reduced to 
	
	\begin{equation*}
		r^2 = \, - \frac{a^2 m}{2 \pi k},
	\end{equation*}
	
	also it must be that $\gamma(\frac{2 \pi k}{a}) = r \frac{2 \pi k}{a} = w z$ $\rightarrow$ $r^2 = \frac{a^2 z^2 w^2}{(2 \pi k)^2}$. Then,  since $a \neq 0$ one may have
	
	\begin{equation*}
		w^2 = -\frac{2 \pi k m}{z^2}.
	\end{equation*}
	
	In conclusion, since it is possible to choose $w$ for which the equality above never holds  for any $k,z \in \Z$, lightlike geodesics of $Osc_1(1) \times \R/\Gamma$ are never closed. Take for instance  $w = e\in \R$.


	\section{Isometries of the oscillator groups and compact quotients}
	In this section we study the isometries of the oscillator groups and their compact quotients. 
	
An isometry of  a Lie group $(G, \lela\,,\,\rira)$ is a differentiable diffeomorphism $\Psi:G \to G$ such that its differential preserves the metric at every point.  
    The group of isometries of a Lie group with a left-invariant pseudo-Riemannian  metric can be expressed as $Iso(G) = L(G)F(G)$, where $L(G)$ represents the subgroup of left-translations and $F(G)$ is the set of  those isometries that fix the identity element of $G$.  Since every isometry $\psi$ decomposes as $\psi = L_g \circ \phi$ where $\phi(e)=e$, the main question is to determine $F(G)$. 
    
    For any $\phi\in F(G)$ its differential $d\phi_e$ is a linear map on $\mathfrak g$. Let $F(\mathfrak g)$ the set of $d\phi_e$ for  $\phi \in F(G)$.

    A local isometry  is  a map $G\to G$ such that  at the identity element  $e$ is a local  diffeomorphism $\Psi':V_1 \to V_2$, where $V_1, V_2$ are neighborhhods of $e\in g$ and the  differential $d\Psi'$ preserves the metric at every point of $V_1$.  To compute local isometries, M\"uller proved the next result.

    \begin{thm}[\cite{MU} Theorem 2.2] 
    Let $(G, \lela\,,\,\rira)$ denote a Lie group with a bi-invariant metric.     Let $A$ be a linear endomorphism of $\mgg$. Then there exists a local isometry $\Phi$ of $G$ at $e$ such that $d \Phi_e = A$ if and only if $A$ satisfies the following conditions:
        \begin{enumerate}
            \item $\lela AX, AY \rira = \lela X, Y \rira$, for all   $X,Y \in \mgg$
            \item $A([X,[Y,Z]]) = [AX,[AY,AZ]]$ for all $X,Y,Z \in \mgg$.
        \end{enumerate}
        
    \end{thm}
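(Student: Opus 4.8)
The plan is to prove both implications by translating the two algebraic conditions on $A$ into the Riemannian data carried by the bi-invariant metric. The starting point is the standard fact that for a bi-invariant metric the Levi-Civita connection on left-invariant vector fields is $\nabla_X Y = \tfrac12[X,Y]$, so the geodesics through $e$ are the one-parameter subgroups $t\mapsto \exp(tX)$ and the Riemannian exponential at $e$ coincides with the Lie-theoretic one, $\exp_e=\exp$. A short computation using the Jacobi identity then gives the curvature tensor on left-invariant fields as $R(X,Y)Z=-\tfrac14[[X,Y],Z]$. Thus the metric datum at $e$ is $\lela\,,\,\rira$ and the curvature is entirely encoded by the double bracket, which is what makes the two hypotheses geometrically meaningful.

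For the necessity of $(1)$ and $(2)$: if $\Phi$ is a local isometry with $\Phi(e)=e$ and $d\Phi_e=A$, then $A$ is by definition a linear isometry of $(T_eG,\lela\,,\,\rira)$, which is exactly condition $(1)$. Moreover every isometry preserves the curvature tensor, $d\Phi\,(R(X,Y)Z)=R(d\Phi X,d\Phi Y)\,d\Phi Z$; evaluating at $e$ and using the formula above yields $A[[X,Y],Z]=[[AX,AY],AZ]$ for all $X,Y,Z$. Since this holds for every triple, antisymmetry of the bracket together with relabeling turns it into the equivalent form $A[X,[Y,Z]]=[AX,[AY,AZ]]$, which is condition $(2)$.

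For the harder sufficiency direction I would construct the candidate explicitly as $\Phi:=\exp\circ A\circ\exp^{-1}$ on a normal neighborhood of $e$; this fixes $e$ and has $d\Phi_e=A$, so the only point at issue is that $\Phi$ be a local isometry. The key geometric observation is that $(G,\lela\,,\,\rira)$ is a locally symmetric space: the inversion $\sigma(g)=g^{-1}$ satisfies $\sigma(\exp X)=\exp(-X)$, fixes $e$, has $d\sigma_e=-\mathrm{id}$, and is an isometry of the bi-invariant metric, so it is the geodesic symmetry at $e$, whence $\nabla R=0$. For a locally symmetric space Cartan's theorem applies: a linear isometry $A$ of the tangent space at a point that intertwines the curvature tensors, $A\,R(X,Y)Z=R(AX,AY)AZ$, integrates through the exponential map to a local isometry. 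Condition $(2)$ is precisely this intertwining property, so $\Phi$ is a local isometry with $d\Phi_e=A$.

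The main obstacle is this last step, namely showing that $\Phi=\exp\circ A\circ\exp^{-1}$ preserves the metric at every point of the neighborhood and not merely at $e$. I would handle it by a Jacobi-field comparison along radial geodesics, which is the content of Cartan's theorem: along $\gamma(t)=\exp(tX)$ a Jacobi field vanishing at $0$ is governed by the curvature operator $K_X=R(\cdot,\gamma')\gamma'$, and since $\nabla R=0$ this operator is constant in a parallel frame; condition $(2)$ gives $A K_X = K_{AX} A$, so $A$ together with parallel transport carries Jacobi fields along $\gamma$ isometrically to Jacobi fields along $\exp(tAX)$. Because $d\exp$ is expressed through such Jacobi fields, this shows $d\Phi$ is a linear isometry at each point. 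The bookkeeping in this argument, equivalently a careful invocation of the Cartan--Ambrose--Hicks machinery for symmetric spaces, is the technical heart of the proof.
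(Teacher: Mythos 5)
The paper offers no proof of this statement: it is imported verbatim from M\"uller \cite{MU} and used as a black box, so there is no internal argument to compare yours with. Judged on its own merits, your proposal is correct and is essentially the canonical textbook proof. The formulas $\nabla_XY=\tfrac12[X,Y]$ and $R(X,Y)Z=-\tfrac14[[X,Y],Z]$ on left-invariant fields are right; necessity follows exactly as you say, and the passage from $A[[X,Y],Z]=[[AX,AY],AZ]$ to condition (2) by antisymmetry of the bracket and relabeling is legitimate. For sufficiency, $\Phi=\exp\circ A\circ\exp^{-1}$ combined with Cartan's theorem on the locally symmetric space is the expected route; note that condition (1) together with nondegeneracy of the form forces $A$ to be invertible, which you use tacitly when forming $\exp^{-1}$ and which deserves a line.

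Two caveats, neither fatal. First, the metric here is pseudo-Riemannian (Lorentzian in the paper's applications), whereas you phrase the key step in Riemannian language (``Riemannian exponential,'' Cartan's theorem for symmetric spaces). You must invoke the semi-Riemannian version of the Cartan rigidity theorem (see \cite{ON}, Chapter 8); this is harmless precisely because the Jacobi-field comparison you sketch nowhere uses positive-definiteness, but the statement you cite should match the signature. Second, exhibiting the isometric geodesic symmetry $\sigma(g)=g^{-1}$ only at $e$ yields $(\nabla R)_e=0$, not $\nabla R\equiv 0$; to get local symmetry everywhere you should conjugate by left translations, obtaining the symmetry $s_g(h)=gh^{-1}g$ at each $g$, which is an isometry by bi-invariance --- a one-line fix. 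With these adjustments your outline is a complete and correct proof of the cited theorem.
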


The aim now is to give the isometry group of $\oscn$. As explained the essential point is to determine the isotropy subgroup $F(\oscn)$. Set 

- $\mathfrak f(\oscn)$ the Lie algebra of $F(G)$,

- $F(\mathfrak{osc}_n(\lambda_1, \hdots, \lambda_n))$ the group of isometries of the bilinear form on $\mathfrak{osc}_n(\lambda_1, \hdots, \lambda_n)$, that is 
$$Q_e = dtdz + \sum_{j=1}^n \frac{1}{\lambda_j}(dx_j^2 + dy_j^2).$$
- $\mathfrak f(\mathfrak{osc}_n(\lambda_1, \hdots, \lambda_n))$ the Lie algebra of $F(\mathfrak{osc}_n(\lambda_1, \hdots, \lambda_n))$. 

Since $\oscn$ is simply connected, the following map is a isomorphism (see \cite{MU})
$$\phi \in F(\oscn) \quad \to \quad d\phi_e\in F(\mathfrak g).$$
Moreover the group $F(\mathfrak g)$ consists of the linear maps $A:\mathfrak g\to \mathfrak g$ satisfying the conditions of the theorem above. 

Bourseau in \cite{Bou} studied to group $F(\mathfrak{osc}_n(\lambda_1, \hdots, \lambda_n))$. In the next paragraphs we reproduce the main  information of that work. Let $\rho$ denote the following matrix of $GL(2n, \mathbb R)$:
$$ \rho = \left( 
\begin{matrix} 
\lambda_1 & & & & \\
& \lambda_1 &   & &\\
& & \ddots & & \\
& & & \lambda_n & \\
& & & & \lambda_n
\end{matrix}\right).$$
Let $p\in \mathbb N$ with $0:=n_0 < n_1 < \hdots < n_p:=n_p$ such that
$$\rho_{\nu}:=\lambda_{n_{\nu - 1}+ 1}= \hdots = \lambda_{n_{\nu}}\quad \mbox{ for } \nu=1, \hdots, p$$ and let $m_{\nu}:=n_{\nu}- n_{{\nu}-1}$. 
    
    \begin{prop} For the bi-invariant metric in \eqref{metricosc}, let $A\in F(\mathfrak g)$. Then for $\nu=1, \hdots, p$, the map  $A$ has a matrix in the basis $Z, \{X_i, Y_i\}, T$ of $\mathfrak{osc}_n(\lambda_1, \hdots, \lambda_n)$:
    	$$\varepsilon  \left( 
    	\begin{matrix} 
    	 1 &	c_1^{\tau}	&\hdots  &c_p^{\tau}   &   -\frac12 \sum_{\nu=1}^p  \rho c_{\nu}^{\tau} c_{\nu}\\
    0	 &	B_1 & & &  - \rho_1 B_1 c_1\\
    \vdots	&	& \ddots &   &  \vdots\\
    0	&	& & B_p &  - \rho_p B_1 c_p\\
   0 & & & 0 & 1
    	\end{matrix}\right), \quad \mbox{where } \varepsilon=\pm 1, c_{\nu}\in \mathbb R^{2m_{\nu}}, B_{\nu}\in \mathrm{O}(2 m_{\nu}).$$
    \end{prop}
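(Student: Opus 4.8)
The plan is to exploit the fact that, although an element $A\in F(\mgg)$ need not be a Lie algebra automorphism, the defining condition $A([X,[Y,W]])=[AX,[AY,AW]]$ together with the invertibility of $A$ forces $A$ to respect the canonical structure of $\mathfrak{osc}_n$ coming from iterated brackets and from the metric. Set $V=\operatorname{span}\{X_i,Y_i\}$, so that $\mgg=\R Z\oplus V\oplus \R T$, and write $D:=\ad_T|_V=N_\lambda$, which satisfies $D^2=-\lambda_i^2\,\mathrm{Id}$ on each plane $\operatorname{span}\{X_i,Y_i\}$. A direct computation of the double brackets (using $[T,[T,X_i]]=-\lambda_i^2X_i$ and $[X_i,[T,X_i]]=\lambda_i Z$) shows that their span is exactly $[\mgg,\mgg]=\R Z\oplus V$; since $A$ is onto, as $(X,Y,W)$ vary the right-hand side of the bracket condition spans the same subspace, whence $A([\mgg,\mgg])=[\mgg,\mgg]$. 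The restriction of the metric to $[\mgg,\mgg]$ is positive definite on $V$ with $Z$ null and orthogonal to $V$, so its radical is precisely $\R Z$; being a metric isometry that preserves $[\mgg,\mgg]$, the map $A$ preserves this radical, and therefore $A(Z)=\mu Z$ for some $\mu\neq 0$.

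Next I would decompose $A$ along $\mgg=\R Z\oplus V\oplus \R T$: for $X\in V$ write $AX=f(X)Z+g(X)$ with $f\colon V\to\R$ and $g\colon V\to V$ linear, and write $AT=\alpha Z+v+\beta T$ with $v\in V$. The metric condition alone yields four relations. From $\la AX,AX'\ra=\la X,X'\ra$ one gets that $g$ is an isometry of the (positive definite) restriction of the metric to $V$, in particular invertible; from $\la AZ,AT\ra=1$ one gets $\beta=1/\mu$; from $\la AT,AT\ra=0$ one gets $\alpha=-\tfrac{\mu}{2}\la v,v\ra$; and from $\la AX,AT\ra=0$ one gets $f(X)=-\mu\,\la g(X),v\ra$. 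Thus $f$ and $\alpha$ are entirely determined by $g$, $v$ and $\mu$, and it remains only to pin down $\mu$ and the shape of $g$.

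The heart of the argument is the bracket condition specialized to $X=Y=T$ and $W\in V$. Since $[T,[T,W]]=D^2W$, the left side is $A(D^2W)=-\rho_\nu^2\bigl(f(W)Z+g(W)\bigr)$ when $W$ lies in the $(-\rho_\nu^2)$-eigenspace $V_\nu$ of $D^2$ (the block where $\lambda_i=\rho_\nu$), while, using that $Z$ is central and that $[v',w']\in\R Z$ for $v',w'\in V$, the right side reduces to $\beta^2 D^2 g(W)$ plus a central term. Comparing the $V$-components gives $D^2g(W)=-\beta^{-2}\rho_\nu^2\,g(W)$, equivalently $g^{-1}D^2g=\beta^{-2}D^2$. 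Hence $D^2$ is conjugate to $\beta^{-2}D^2$; matching the extreme eigenvalues of $D^2$, which are the negative reals $-\rho_\nu^2$, and using $\beta^{-2}>0$, forces $\beta^2=1$. Therefore $\mu=\beta=:\varepsilon\in\{\pm1\}$, and $g$ preserves each eigenblock $V_\nu$; on $V_\nu$ the metric is a positive multiple of the Euclidean one, so $g|_{V_\nu}$ is orthogonal, and I set $B_\nu:=\varepsilon\,g|_{V_\nu}\in\mathrm{O}(2m_\nu)$.

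Finally I would assemble the matrix. Writing $v=\sum_\nu v_\nu$ with $v_\nu\in V_\nu$ and introducing $c_\nu$ through $v_\nu=-\varepsilon\rho_\nu B_\nu c_\nu$, the relations $f(X)=-\varepsilon\la g(X),v\ra$ and $\alpha=-\tfrac{\varepsilon}{2}\la v,v\ra$ translate, via orthogonality of $B_\nu$ and the identity $\la v_\nu,v_\nu\ra=\rho_\nu c_\nu^\tau c_\nu$, into exactly the entries $\varepsilon c_\nu^\tau$ in the $Z$-row, $-\varepsilon\rho_\nu B_\nu c_\nu$ in the $T$-column, and $\varepsilon\bigl(-\tfrac12\sum_\nu\rho_\nu c_\nu^\tau c_\nu\bigr)$ in the top-right corner, together with $A(Z)=\varepsilon Z$ and the $T$-diagonal entry $\varepsilon$. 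I expect the main obstacle to be the third step: one must resist treating $A$ as an automorphism and instead extract, from the single instance $X=Y=T$ of the double-bracket condition, both the scalar constraint $\beta^2=1$ and the block-preservation of $g$, all the while carefully tracking the symplectic term $[v',w']=\omega(v',w')Z$ so that the central components are correctly accounted for.
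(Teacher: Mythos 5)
You are not competing against any in-paper argument here: the paper reproduces this proposition without proof, citing Bourseau's thesis \cite{Bou} for all of the structure theory of $F(\mathfrak{osc}_n(\lambda_1,\hdots,\lambda_n))$, so your blind derivation has to stand on its own — and it does. Each key step checks out: invertibility of $A$ follows from M\"uller's condition (1) and nondegeneracy of the metric; the double brackets $[T,[T,X_i]]=-\lambda_i^2X_i$ and $[X_i,[T,X_i]]=\lambda_i Z$ indeed span $\mathbb{R}Z\oplus V=[\mathfrak{g},\mathfrak{g}]$, so your span argument legitimately gives $A([\mathfrak{g},\mathfrak{g}])=[\mathfrak{g},\mathfrak{g}]$ without ever treating $A$ as an automorphism; the radical of the restricted metric is exactly $\mathbb{R}Z$, whence $AZ=\mu Z$; the four metric identities ($\mu\beta=1$, $\alpha=-\tfrac{\mu}{2}\la v,v\ra$, $f(X)=-\mu\la g(X),v\ra$, $g$ an isometry of the positive definite form on $V$) are correct; and the single instance $X=Y=T$, $W\in V_\nu$ of condition (2) does yield $D^2g(W)=-\beta^{-2}\rho_\nu^2\,g(W)$, after which your spectral comparison of $D^2$ with $\beta^{-2}D^2$ (matching the extreme eigenvalues, all of the form $-\rho_\nu^2<0$) correctly forces $\beta^2=1$, $\mu=\beta=\varepsilon$, and $[g,D^2]=0$, i.e. preservation of each block $V_\nu$. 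Two remarks. First, you never inspect the $Z$-component of the $X=Y=T$ identity; for a pure necessity proof this is harmless, but it is worth one sentence that it is automatically consistent: on $V_\nu$ one computes $\omega(v,Dg(W))=\rho_\nu^2\la v,g(W)\ra$, which matches the left-hand central term $-\rho_\nu^2f(W)$ precisely because of your relation $f(W)=-\varepsilon\la g(W),v\ra$, so no hidden constraint is being discarded. Second, your assembled normal form actually corrects two typos in the paper's displayed matrix: the $T$-column entry of the $\nu$-th block must be $-\rho_\nu B_\nu c_\nu$ (the paper prints $B_1$ in every block), and the corner entry is $-\tfrac12\sum_{\nu=1}^p\rho_\nu\, c_\nu^\tau c_\nu$ with the subscript on $\rho$ (as printed, $\rho$ is the $2n\times 2n$ diagonal matrix, which is dimensionally wrong there); your identity $\la v_\nu,v_\nu\ra=\rho_\nu c_\nu^\tau c_\nu$ confirms the corrected version. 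Note finally that you prove only that every $A\in F(\mathfrak{g})$ has this form, which is exactly what the statement asserts; the converse (every such matrix satisfies M\"uller's two conditions), implicitly needed later for the isomorphism $\Psi$ onto $K\ltimes_\pi\mathbb{R}^{2n}$, is a separate verification you were not asked for.
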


Below, one describes the structure of $F(\mathfrak{osc}_n(\lambda_1, \hdots, \lambda_n))$. Recall that $\mathrm{O}(1)=\{-1,1\}$. 

\begin{defn} Let $K$ be the compact group 
	$$K= \mathrm{O}(1)\times \times_{\nu=1}^p \mathrm{O}(2m_{\nu}).$$
	Define the semidirect product
	$$F=K\ltimes_{\pi} \mathbb R^{2n}$$
	where 
	$$\pi(\varepsilon, B_1, \hdots, B_p)(c)=\left( \begin{matrix}
	B_1  & & \\
	 & \ddots & \\
	  & & B_p
	\end{matrix} \right) c.
	$$	
	\end{defn}
\begin{prop} The map $\Psi: F(\mathfrak{osc}_n(\lambda_1, \hdots, \lambda_n)) \to F$ given by
	$$\Psi \left( \varepsilon  \left( 
	\begin{matrix} 
	1&	c_1^{\tau}	&\hdots  &c_p^{\tau}   &   -\frac12 \sum_{\nu=1}^p  \rho c_{\nu}^{\tau} c_{\nu}\\
0&	B_1 & & &  - \rho_1 B_1 c_1\\
\vdots &	& \ddots &   &  \\
0&	& & B_p &  - \rho_p B_1 c_p\\
0  & &\hdots  & 0 & 1
	\end{matrix}\right) \right) =(\varepsilon, B_1, \hdots, B_p, c)$$
	is a isomorphism of Lie groups. 
\end{prop}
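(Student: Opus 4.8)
The plan is to exhibit $\Psi$ as a bijective, smooth group homomorphism whose inverse is also smooth; since both $F(\mathfrak{osc}_n(\lambda_1,\hdots,\lambda_n))$ and $F$ are Lie groups, this suffices to conclude that $\Psi$ is an isomorphism of Lie groups. The whole argument rests on the normal form supplied by the previous Proposition: every element $A$ is, up to the global sign $\varepsilon=\pm1$, a block upper-triangular matrix whose only free data are the orthogonal blocks $B_1,\hdots,B_p$ and the vector $c=(c_1,\hdots,c_p)\in\R^{2n}$, while the top-right scalar $-\tfrac12\sum_\nu\rho_\nu c_\nu^\tau c_\nu$ and the right column $(-\rho_\nu B_\nu c_\nu)_\nu$ are \emph{determined} by these data. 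First I would read off from this that $\Psi$ is well defined and injective: the sign $\varepsilon$ is the common factor of the two corner entries, and after dividing by $\varepsilon$ the blocks $B_\nu$ and the row $c^\tau$ are literally entries of the matrix, so distinct matrices give distinct tuples and conversely.

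Surjectivity is the first point that needs genuine work: the previous Proposition only asserts that elements of the isotropy group have the displayed shape, so to know that $\Psi$ hits every $(\varepsilon,B_1,\hdots,B_p,c)\in F$ I would verify the converse, namely that each such matrix really satisfies the two conditions of M\"uller's Theorem, $\langle AX,AY\rangle=\langle X,Y\rangle$ and $A([X,[Y,Z]])=[AX,[AY,AZ]]$, for the form $Q_e$ and the oscillator bracket. The orthogonality condition is exactly what forces the prescribed top-right scalar and right column, which is where the weights $\rho_\nu$ and the choice $B_\nu\in\mathrm{O}(2m_\nu)$ enter; the bracket condition then holds because $B$ is block-diagonal and each block commutes with the relevant rotation generator. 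Since $\rho$ is constant on each index block $\{n_{\nu-1}+1,\hdots,n_\nu\}$, these checks decouple across $\nu$ and are routine.

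The heart of the proof, and the step I expect to be the main obstacle, is the homomorphism property. Writing $A=\varepsilon M(B,c)$ and $A'=\varepsilon' M(B',c')$ with corner-normalized part $M$, the product is $AA'=\varepsilon\varepsilon'\,M(B,c)M(B',c')$, so the new sign is $\varepsilon\varepsilon'$, matching the $\mathrm{O}(1)$ factor of $K$. A direct block multiplication of the two upper-triangular matrices shows that the middle block multiplies as $BB'=\mathrm{diag}(B_\nu B'_\nu)$, which is precisely the product in the factor $\times_{\nu=1}^{p}\mathrm{O}(2m_\nu)$ of $K$, while the top row, and hence $c$, recombines through the block-diagonal action $\pi$, reproducing the translation law of the semidirect product $K\ltimes_\pi\R^{2n}$. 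The delicate bookkeeping is to confirm closure: one must check that the resulting top-right scalar and right column of $AA'$ are again exactly the dependent quantities $-\tfrac12\sum_\nu\rho_\nu\tilde c_\nu^\tau\tilde c_\nu$ and $(-\rho_\nu\tilde B_\nu\tilde c_\nu)_\nu$ attached to the new parameters $\tilde B=BB'$ and the new $\tilde c$. This holds a priori since $AA'$ again lies in the isotropy group and hence in the normal form of the previous Proposition, but I would also verify it by hand, as this cross-term computation (using $B'\in\mathrm{O}$ and the block-constancy of $\rho$) is exactly what makes $\Psi$ multiplicative rather than merely bijective. Finally, both $\Psi$ and $\Psi^{-1}$ are given by selecting, respectively reassembling, matrix entries, hence are smooth, and the identification as Lie groups is complete.
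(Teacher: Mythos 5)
Your overall plan is sound, and there is in fact no internal proof in the paper to compare it with: the proposition is reproduced from Bourseau \cite{Bou} without argument, so your direct verification (injectivity and well-definedness from the normal form, surjectivity via the converse check of M\"uller's two conditions, multiplicativity by explicit block multiplication) supplies exactly what the paper leaves implicit. Your observation that the preceding Proposition only gives one inclusion, so that surjectivity of $\Psi$ genuinely requires checking that every matrix $\varepsilon M(B,c)$ of the displayed shape satisfies both of M\"uller's conditions, is the right one, and that check does go through: condition (1) reduces to $B^{\tau}\rho^{-1}B=\rho^{-1}$ together with the cancellation of the cross terms forced by the prescribed corner entry, and condition (2) reduces, in its only nontrivial instances, to identities such as $(Bu)^{\tau}\rho\,Bv=u^{\tau}\rho\,v$, using $\omega(u,N_\lambda v)=u^{\tau}\rho\,v$.

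Two points in your sketch need repair. First, your stated reason for the bracket condition --- that ``each block commutes with the relevant rotation generator'' --- is false: a general $B_\nu\in\mathrm{O}(2m_\nu)$ does \emph{not} commute with $J$ or $N_\lambda$; the orthogonal matrices that do are exactly $\mathrm{O}(2m_\nu)\cap Sp(2m_\nu)$, which is why the strictly smaller group $\tilde K_1$ appears later in the paper when isolating the isometries that are automorphisms. If commutation with $N_\lambda$ were really needed, surjectivity onto all of $K$ would fail. What the computation actually uses is $B^{\tau}\rho B=\rho$ and $B\rho=\rho B$, both consequences of $\rho$ being scalar on each index block --- your own remark about block-constancy of $\rho$, correctly deployed, is the whole story. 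Second, a convention issue in the homomorphism step: multiplying the corner-normalized matrices gives $M(B,c)M(B',c')=M\bigl(BB',\,(B')^{\tau}c+c'\bigr)$ (orthogonality of $B'$ and $B\rho=\rho B$ confirm that the dependent column and corner entry close up, as you anticipated), so $\Psi$ as literally written is multiplicative only if $F=K\ltimes_{\pi}\R^{2n}$ carries the product $(k,c)(k',c')=(kk',\pi(k')^{-1}c+c')$; under the more common convention $(k,c)(k',c')=(kk',c+\pi(k)c')$ one must instead use $A\mapsto(\varepsilon,B_1,\hdots,B_p,\pi(B)c)$. Since the paper never fixes the convention on $F$, this is bookkeeping rather than an error, but your write-up should pin it down explicitly. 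With these two repairs, your outline compiles into a complete and correct proof.
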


To describe the structure of $F$, we identify the interior automorphisms, that we introduced as conjugation maps. Let $I_h$ denote the interior automorphism, with $d_e I_h=Ad(h): \mathfrak{osc}_n(\lambda_1, \hdots, \lambda_n)$. If we denote by $h=(z,v,t)$, clearly $I_h=I_{(0,v,t)}$.

Let $P_{\lambda_i}(t)\in \mathrm{SO}(2)$ define by
$$P_{\lambda_i}(t):=\left\{
	 \begin{array}{cl}
\left( \begin{matrix}
	\sin(t\lambda_i) & 1 -\cos(t\lambda_i)\\
-1 +\cos(t\lambda_i)  & \sin(t\lambda_i)
\end{matrix}\right) & \mbox{ for } t\in \mathbb R -\{\frac{2m\pi}{\lambda_i}, m\in \mathbb Z\}\\
 \left( \begin{matrix}
1 &0\\
0  & 1
\end{matrix}\right) & \mbox{ for } t\in\{\frac{2m\pi}{\lambda_i}, m\in \mathbb Z\}
\end{array}	
\right.
$$
and define $P(t)\in \mathrm{SO}(2n)$ by
$$P(t)=\left( \begin{matrix}
P_{\lambda_1}(t) & & \\
& \ddots & \\
& & P_{\lambda_n}(t)
\end{matrix}\right).$$

\begin{lem} Consider the following element on $F(\mathfrak{osc}_n(\lambda_1, \hdots, \lambda_n))$:
	$$\left( \begin{matrix}
	1 & & \\
	& B & \\
	& & 1
	\end{matrix}\right).$$
Then there exists a isometry $\Theta(B):\oscn \to \oscn$ given by
$\Theta(B)(z,v,t)=(z, P(t)^{\tau}BP(t)v, t)$ with $d \Theta(B)_e=\left( \begin{matrix}
1 & & \\
& B & \\
& & 1
\end{matrix}\right).$
\end{lem}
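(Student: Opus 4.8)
The plan is to prove the two assertions of the lemma separately: that the differential of $\Theta(B)$ at $e$ is the prescribed block matrix, and that $\Theta(B)$ is a global isometry. First note that $\Theta(B)$ is a diffeomorphism: since $P(t)\in\mathrm{SO}(2n)$ one checks $\Theta(B)\circ\Theta(B')=\Theta(BB')$, whence $\Theta(B)^{-1}=\Theta(B^{-1})$. For the differential, $0\in\{2m\pi/\lambda_i\}$ gives $P(0)=\mathrm{Id}$, so at $e=(0,0,0)$ the map reads $(z,v,t)\mapsto(z,P(t)^{\tau}BP(t)v,t)$ with $P(0)=\mathrm{Id}$; differentiating at $v=0$ the $t$-derivative of the $v$-block drops out (it is linear in $v$), and one reads off $d\Theta(B)_e=\mathrm{diag}(1,B,1)$, exactly the claimed matrix. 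This part is immediate.

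The substance is the isometry property, which I would prove by a direct pullback of the left-invariant metric \eqref{metricosc}. Writing $v=(x_1,y_1,\dots,x_n,y_n)$, record the metric in matrix form: the spatial part is $dv^{\tau}\rho^{-1}dv$ with $\rho$ the matrix of the excerpt, while the coupling term $\sum_j(y_j\,dx_j+x_j\,dy_j)=v^{\tau}\Sigma\,dv$ for the symmetric matrix $\Sigma$ with $2\times2$ blocks $\left(\begin{smallmatrix}0&1\\1&0\end{smallmatrix}\right)$; thus on a tangent vector $(\dot z,\dot v,\dot t)$ one has $g=\dot z\,\dot t+(v^{\tau}\Sigma\dot v)\,\dot t+\dot v^{\tau}\rho^{-1}\dot v$. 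Setting $M(t):=P(t)^{\tau}BP(t)$, the image tangent vector has components $\dot z'=\dot z$, $\dot t'=\dot t$ and $\dot v'=M(t)\dot v+M'(t)v\,\dot t$. I would first dispatch the spatial part: being the spatial block of an element of $F(\mathfrak{osc}_n(\lambda_1,\dots,\lambda_n))$, $B$ is block diagonal with blocks in $\mathrm{O}(2m_{\nu})$ by the Proposition, hence commutes with the block-scalar matrix $\rho$ and satisfies $B^{\tau}\rho^{-1}B=\rho^{-1}$; likewise $P(t)\in\mathrm{SO}(2n)$ gives $P(t)^{\tau}\rho^{-1}P(t)=\rho^{-1}$. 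Combining these yields $M^{\tau}\rho^{-1}M=\rho^{-1}$, so $(M\dot v)^{\tau}\rho^{-1}(M\dot v)=\dot v^{\tau}\rho^{-1}\dot v$ and the $\dot z\dot t$ term is preserved trivially.

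What remains is to show that all terms of $\Theta(B)^{*}g$ involving $M'$ or the coupling $\Sigma$ reassemble to exactly $(v^{\tau}\Sigma\dot v)\,\dot t$. Collecting the coefficient of $\dot t$ (linear in $\dot v$) and of $\dot t^{2}$ reduces the whole claim to the two matrix identities
\begin{equation*}
M^{\tau}\Sigma M+2\,(M')^{\tau}\rho^{-1}M=\Sigma,\qquad \Sym\!\left(M^{\tau}\Sigma M'+(M')^{\tau}\rho^{-1}M'\right)=0.
\end{equation*}
Verifying these is the hard part, and it is exactly where the specific definition of $P(t)$ is used. Note that the naive candidate $(z,v,t)\mapsto(z,Bv,t)$ fails to be an isometry precisely because a block-orthogonal $B$ need not satisfy $B^{\tau}\Sigma B=\Sigma$; the role of the $t$-dependent dressing $P(t)^{\tau}(\,\cdot\,)P(t)$ is to correct this defect. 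I would differentiate the block definition of $P(t)$ to obtain its first-order law (a relation expressing $P'(t)$ through $\rho$ and $R(t)$), substitute $M'=(P')^{\tau}BP+P^{\tau}BP'$ into the two displayed identities, and use $P(t)^{\tau}\rho^{-1}P(t)=\rho^{-1}$, the commutation of $B$ with $\rho$, and the algebraic relation between $P(t)$, $R(t)$ and $\Sigma$ coming from the block form; both identities then collapse, the derivative term $M'$ producing exactly the infinitesimal rotation needed to match the position-dependent twist $v^{\tau}\Sigma\,dv$ of the left-invariant metric.

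Finally, as an independent identification, one checks that $A=\mathrm{diag}(1,B,1)$ satisfies both conditions of M\"uller's theorem above: condition (1) is the spatial computation $A^{\tau}Q_eA=Q_e$ just carried out, and the triple-bracket condition (2) holds because $B$ preserves each $\rho$-eigenspace and thus commutes with the structure constants. Hence $A\in F(\mathfrak g)$, and since $\oscn$ is connected, complete and simply connected, the isometry fixing $e$ with differential $A$ is unique; this identifies it with the explicit map $\Theta(B)$ and closes the argument. I expect the reduction to the two matrix identities above, and the bookkeeping of the derivative $M'(t)$ against the coupling form $\Sigma$, to be the only genuinely delicate step.
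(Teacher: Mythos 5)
The paper itself offers no proof of this lemma: it is stated as part of the material explicitly reproduced from Bourseau \cite{Bou}, so your attempt can only be judged on its own terms, and on those terms it has a fatal gap exactly where you place the weight. Your reduction of the isometry property to the two matrix identities is the right shape, but you never verify them --- ``both identities then collapse'' \emph{is} the content of the lemma, deferred rather than proved. Worse, as you have set them up the identities are false, and an actual attempt would have revealed it. You transcribed the coupling of \eqref{metricosc} as the symmetric form $v^{\tau}\Sigma\,dv$; granting your own assumption that $P(t)$ is orthogonal, take $B=r(\phi)\in\mathrm{SO}(2)$ in a single block, so that $B$ commutes with $P(t)$, $M\equiv B$ and $M'=0$: your first identity then reads $B^{\tau}\Sigma B=\Sigma$, which fails for a generic rotation, since $r(\phi)^{\tau}\Sigma\, r(\phi)$ has entries $\pm\sin 2\phi,\ \cos 2\phi$. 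The root cause is a misprint you inherited: \eqref{metricosc} as printed cannot be the bi-invariant metric, because the substitution $\tilde z=z+\sum_j x_jy_j$ turns it into the flat metric $dt\,d\tilde z+\sum_j\lambda_j^{-1}(dx_j^2+dy_j^2)$, contradicting the paper's own Christoffel symbols and geodesic equations \eqref{geodcomp}. Pulling $Q_e$ back through the printed group law gives the correct coupling, which is antisymmetric, proportional to $(v^{\tau}J\,dv)\,dt$, i.e.\ $\sum_j(y_j\,dx_j-x_j\,dy_j)\,dt$ up to normalization; only with this $J$-coupling can your two identities hold. The same applies to $P$: the printed $P_{\lambda_i}(t)$ satisfies $P^{\tau}P=2(1-\cos(t\lambda_i))\,\mathrm{Id}$, so it is not in $\mathrm{SO}(2)$, and it tends to the zero matrix at the patched points $t\in\frac{2\pi}{\lambda_i}\ZZ$ while being defined as the identity there, so with the literal $P$ the map $\Theta(B)$ is not even continuous (this also undercuts your computation of $d\Theta(B)_e$). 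The computation goes through only for the normalized half-angle rotation $P_{\lambda_i}(t)/(2|\sin(t\lambda_i/2)|)=\pm\, r(-t\lambda_i/2)$, under which the conjugation $P(t)^{\tau}BP(t)$ is smooth. A blind proof of this lemma had to detect and repair these misprints before the ``collapse'' you anticipate can occur; yours builds on them instead.

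Your closing ``independent identification'' via M\"uller does not rescue the argument, because it is circular: M\"uller's theorem and simple connectedness produce \emph{some} isometry $\phi$ fixing $e$ with $d\phi_e=\mathrm{diag}(1,B,1)$, but uniqueness identifies $\phi$ with $\Theta(B)$ only if you already know that $\Theta(B)$ is an isometry fixing $e$ with that differential --- precisely the claim under proof. (A noncircular variant would evaluate $\phi$ on one-parameter subgroups, $\phi(\exp sX)=\exp(sAX)$, and reconstruct the closed formula from the group law; you do not do this.) Moreover, your verification of M\"uller's condition (2) rests on the assertion that $B$ ``commutes with the structure constants,'' which is not true: a block $B_{\nu}\in\mathrm{O}(2m_{\nu})$ of determinant $-1$ anticommutes with the relevant complex structure, so $B$ does not commute with $\ad T=N_{\lambda}$ --- which is exactly why the naive map $(z,v,t)\mapsto(z,Bv,t)$ fails and the $t$-dependent dressing is needed, as you yourself observe. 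What is true, and what the triple-bracket condition actually requires, is $B^{\tau}\rho B=\rho$ (equivalently, $B$ preserves the form $\omega(\cdot,N_{\lambda}\cdot)$) together with $[B,N_{\lambda}^2]=0$; both hold for block-diagonal orthogonal $B$, but not for your stated reason.
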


Let $K_1(\oscn)$ define by
$$K_1(\oscn)=\{\Theta(B):\oscn \to \oscn \,  /\,$$
$$\qquad \qquad \qquad  \qquad \qquad \qquad \Theta(B)(z,v,t)=(z,P(t)^{\tau}BP(t)v,t)\, B_{\nu}\in\mathrm{O}(2m_{\nu})\}$$
and let $$K(\oscn)=K_1(\oscn)\cup sK_1(\oscn),$$
where $s:\oscn \to \oscn, \, s(g)=g^{-1}$ the inversion map.
Moreover 
$$K_1(\oscn)_0=\{\Theta(B)\in K_1(\oscn)\,/\, B_{\nu}\in \mathrm{SO}(2m_{\nu})\}.$$
Recall that the conjugation map $I{(z,v,t)}$ depends only on $(v,t)$ and this set $\{(v,t): v \in \mathbb R^{2n}, t\in \mathbb R\}$ with the structure $\oscn/\{(z,0,0)\}_{z\in\mathbb R}$ is a solvable Lie group of dimension $2n+1$. Denote by $Int(\oscn)$ the set of inner automorphisms, which is a subgroup of the isometry group. 

Let $s:\oscn \to \oscn$ denote the inversion  isometry:
$$s: (z,v,t) \quad \to \quad (z,v,t)^{-1}=(-z, -R(-t)v,-t).$$

    \begin{thm}\cite{Bou} \label{Bou}
        The subgroup of isometries fixing the identity element has the next structure
        $$F(\oscn)=K(\oscn)\cdot 
       Int(\oscn),$$
       $ \, \mbox{ with  }\, K(\oscn)\cap Int(\oscn)=\{id\}.$
        
        Furthermore, $Int(\oscn)$ is a normal subgroup in $F(\oscn)$ and it holds
    $$ \begin{array}{crcl}
        	(i) & \Theta(B)\circ I_{(v,t)}\circ \Theta(B)^{-1} & = &I_{(JBJ^{\tau}v,t)};\\  	
        
        	(ii) & s\circ I_{(v,t)}\circ s^{-1} & = & I_{(v,t)};\\
        
        (iii) & s \circ \Theta(B) \circ s^{-1} &   = & \Theta(B).
        \end{array}$$
  $F(\oscn)$ consists of $2^{p+1}$ connected components and for the connected component of the identity one has
  $$F(\oscn)_0=K_1(\oscn)_0\cdot Int(\oscn).$$
        \end{thm}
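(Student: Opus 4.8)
The plan is to push the whole statement down to the linear level. Because $\oscn$ is simply connected, M\"uller's theorem gives that $\phi\mapsto d\phi_e$ is an isomorphism $F(\oscn)\cong F(\mgg)$, and post-composing with the isomorphism $\Psi$ of the preceding Proposition identifies $F(\oscn)$ with the explicit semidirect product $F=K\ltimes_\pi\RR^{2n}$. Every assertion --- the product decomposition, the relations $(i)$--$(iii)$, the normality of $Int(\oscn)$, and the component count --- will then be read off inside $F$, once the $\Psi$-images of the three geometric families $\Theta(B)$, $s$ and $I_{(v,t)}$ are computed. The uniqueness half of M\"uller's theorem is what lets me conclude that two isometries fixing $e$ coincide as soon as their differentials at $e$ agree.

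First I would record these images. By the Lemma $d\Theta(B)_e=\mathrm{diag}(1,B,1)$, so $\Theta(B)\mapsto(1,B_1,\dots,B_p,0)$ and $K_1(\oscn)$ is identified with $\{1\}\times\prod_\nu\mathrm{O}(2m_\nu)\times\{0\}$. Differentiating $s(z,v,t)=(-z,-R(-t)v,-t)$ at $e$ gives $ds_e=-\mathrm{Id}$, i.e. the parameters $(-1,I,\dots,I,0)$, so $s$ contributes exactly the sign $\varepsilon=-1$. For the inner automorphisms I write $\Ad((0,v,t))=\Ad((0,v,0))\,\Ad((0,0,t))$: the second factor is $e^{t\,\ad T}=R(t)$, the block rotation in the planes $(X_i,Y_i)$ fixing $Z$ and $T$, while $e^{\ad V}$ (with $V\leftrightarrow v$) fixes $Z$, sends $X_j\mapsto X_j-y_jZ$ and $Y_j\mapsto Y_j+x_jZ$, and pushes $T$ into the $X,Y$-planes plus a $Z$-term. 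Matching with the matrix of the Proposition shows $\Ad((0,v,t))$ has $\varepsilon=1$, orthogonal part $R(t)$, and a translation slot $c=c(v)$ that is a linear isomorphism of $\RR^{2n}$; thus $Int(\oscn)$ is identified with $\{(1,R(t),c):t\in\RR,\ c\in\RR^{2n}\}$ and its translation coordinate sweeps out all of $\RR^{2n}$.

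With these images in hand the product decomposition is immediate: an arbitrary $(\varepsilon,B,c)\in F$ factors in $K\ltimes_\pi\RR^{2n}$ as $(\varepsilon,I,0)\,(1,B,0)\,(1,I,B^{-1}c)$, whose three factors are realized respectively by $s^{(1-\varepsilon)/2}$, by $\Theta(B)$, and by the inner automorphism $I_{(v,0)}$ with $c(v)=B^{-1}c$ (solvable since $c(\cdot)$ is invertible). As $s^{(1-\varepsilon)/2}\Theta(B)\in K_1(\oscn)\cup sK_1(\oscn)=K(\oscn)$, this gives $F(\oscn)=K(\oscn)\cdot Int(\oscn)$. The relations are then short computations: $(ii)$ follows from $s$ being an anti-automorphism with $s^2=\mathrm{id}$, so $s\,I_h\,s^{-1}(x)=(h\,x^{-1}h^{-1})^{-1}=I_h(x)$; $(iii)$ follows from $ds_e=-\mathrm{Id}$ centralizing $d\Theta(B)_e$; and $(i)$ records that conjugation by $\Theta(B)$ acts on the translation slot of $Int(\oscn)$ through the orthogonal block action, which in $(v,t)$-coordinates reads $v\mapsto JBJ^{\tau}v$ with $t$ fixed. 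Normality of $Int(\oscn)$ then follows from $(i)$--$(ii)$ on generators of $K(\oscn)$. For the component count, the Euclidean slot of $F$ is connected, so $\pi_0(F(\oscn))=\pi_0(K)=\mathrm{O}(1)\times\prod_\nu\pi_0(\mathrm{O}(2m_\nu))=(\ZZ/2)^{p+1}$, giving $2^{p+1}$ components; cutting out $\varepsilon=1$ and $B_\nu\in\mathrm{SO}(2m_\nu)$ identifies the identity component as $K_1(\oscn)_0\cdot Int(\oscn)$.

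The delicate point --- and the step I expect to be the main obstacle --- is the intersection $K(\oscn)\cap Int(\oscn)$. In the $\Psi$-picture an element of $K(\oscn)$ has vanishing translation slot $c=0$, while an element of $Int(\oscn)$ has slot $c=c(v)$; since $c(\cdot)$ is injective, $c=0$ forces $v=0$ and the intersection reduces to the one-parameter family $\{\,dI_{(0,0,t)}_e=R(t)\,\}$. The subtlety is precisely that each such rotation also equals $d\Theta(R(t))_e$, so $R(t)$ lies in the image of both subgroups; the content to be established is that, once the rotations produced by $\exp(\RR T)$ are accounted for inside $Int(\oscn)$, the residual freedom of $K(\oscn)$ meets $Int(\oscn)$ only in the identity. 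Controlling this overlap --- equivalently, pinning down the exact ambiguity of the factorization $K(\oscn)\cdot Int(\oscn)$ along the diagonal torus $R(\RR)$ --- is where the careful argument is needed, and everything else in the theorem is formal once the computations of the second paragraph are in place.
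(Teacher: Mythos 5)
The paper gives no proof of this statement: Theorem \ref{Bou} is quoted from Bourseau's thesis \cite{Bou}, and the surrounding material (M\"uller's theorem, the matrix form of $F(\mathfrak g)$, the isomorphism $\Psi$, the lemma on $\Theta(B)$) is precisely the toolkit you assemble. So your reduction to the semidirect product $F=K\ltimes_\pi\RR^{2n}$ is the natural route, and your treatment of the factorization, of relations (ii) and (iii), and of the component count is sound. The problem is the two places where you trusted the printed statement instead of your own computations.

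First, the intersection. You flag $K(\oscn)\cap Int(\oscn)=\{id\}$ as ``where the careful argument is needed'' and defer it; but your own identifications already settle it, in the negative, so no careful argument can close this gap. Since the $2\times 2$ rotation blocks of $P(t)$ and of $R(t_1)$ commute, one has directly $\Theta(R(t_1))(z,v,t)=(z,P(t)^\tau R(t_1)P(t)v,t)=(z,R(t_1)v,t)=I_{(0,0,t_1)}(z,v,t)$, and $R(t_1)$ is blockwise in $\mathrm{SO}(2m_\nu)$, so the whole one-parameter group $\{I_{(0,0,t)}:t\in\RR\}$ lies in $K_1(\oscn)\cap Int(\oscn)$. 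Dimensions corroborate this: the paper's corollary gives $\dim\mathfrak f=\sum_\nu\dim\mathfrak{so}(2m_\nu)+2n$, whereas a trivial intersection would force $\dim K_1(\oscn)+\dim Int(\oscn)=\sum_\nu\dim\mathfrak{so}(2m_\nu)+2n+1$. The statement as transcribed is therefore inconsistent; the clean version replaces $Int(\oscn)$ by its subgroup $\{I_{(0,v,0)}:v\in\RR^{2n}\}$, which maps bijectively onto the translation slot (your $c(v)=J^\tau v$ is a linear isomorphism) and does meet $K(\oscn)$ only in the identity, while with the full $Int(\oscn)$ the intersection is exactly $\{I_{(0,0,t)}\}$. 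A blind proof had to detect this misstatement rather than promise an argument for it.

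Second, your one-line justification of (i) --- ``$v\mapsto JBJ^{\tau}v$ with $t$ fixed'' --- silently assumes $BR(t)B^{\tau}=R(t)$. Conjugating $(1,R(t),c)$ by $(1,B,0)$ moves the rotation part as well as the translation slot, and $B$ centralizes $R(\RR)$ only in special cases. For a reflection block ($m_\nu=1$, $B_\nu\notin\mathrm{SO}(2)$) a direct computation gives $\Theta(B)\circ I_{(0,0,t)}\circ\Theta(B)^{-1}=I_{(0,0,-t)}$, contradicting (i) as printed; worse, for $n\ge 2$, taking $B$ the identity on one $(X_i,Y_i)$-plane and a reflection on another yields $BR(t)B^{\tau}=r(\lambda_1 t)\oplus r(-\lambda_2 t)$ (with $r(\theta)$ the rotation by $\theta$), which for generic $t$ is not of the form $R(t')$, so the conjugate is not inner at all and ``normality of $Int(\oscn)$ follows from (i)--(ii) on generators'' collapses. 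Normality does hold for the genuinely normal subgroup $\{I_{(0,v,0)}\}\cong\RR^{2n}$, the semidirect factor of $F$; (i) and the normality claim for the full $Int(\oscn)$ require the same correction as the intersection, and your write-up should have caught this at the differential level where you computed everything else.
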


As corollary one has that the Lie algebra of $F(\oscn)$ is a Lie algebra isomorphic to the following one
$$\mathfrak f ={\Large \times}_{\nu=1}^p \mathfrak{so}(2m_{\nu}) \ltimes \mathbb R^{2n}.$$
    Let $Aut(\oscn)$ denote the group of automorphisms of the oscillator group and let $Sp(m_{\nu})$ denote the group of $2m_{\nu}\times 2m_{\nu}$-symplectic matrices over $\R$. Now, we would like to determine, which isometries are automorphisms. Denote by $\tilde{K}_1$ the compact subgroup of $K_1(\oscn)$ given by 
     $$\tilde{K}_1=\{\Theta(B)\in K_1(\oscn)/B_{\nu}\in \mathrm{O}(2m_{\nu})\cap Sp(2m_{\nu})\}.$$
	Take $M\in \mathrm{O}(2n)$ given by
	$$M=\left( \begin{matrix}
	1 & 0 & & &\\
	0 & -1 & & &\\
	& & \ddots & &\\
	& & & 1 & 0\\
	& & & 0 & -1
	\end{matrix}
	\right).
	$$
	\begin{prop}
		Let $\tilde{K}$ be the compact subgroup of $K(\oscn)$ given by
	$\tilde{K}=\tilde{K}_1 \cup s\circ \Theta(M) \circ \tilde{K}_1$, then it holds
	$$F(\oscn)\cap Aut(\oscn)=\tilde{K}\cdot Int(\oscn).$$
	\end{prop}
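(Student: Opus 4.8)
The plan is to peel off the inner-automorphism factor using the decomposition $F(\oscn)=K(\oscn)\cdot Int(\oscn)$ of Theorem \ref{Bou}, and then to decide, map by map, which elements of the finite-component group $K(\oscn)=K_1(\oscn)\cup sK_1(\oscn)$ are automorphisms. First note that $Int(\oscn)\subseteq Aut(\oscn)\cap F(\oscn)$, since each conjugation $I_h$ fixes $e$ and is an isometry of the bi-invariant metric. Writing an arbitrary $\phi\in F(\oscn)$ uniquely as $\phi=k\,\iota$ with $k\in K(\oscn)$, $\iota\in Int(\oscn)$ (possible because $K(\oscn)\cap Int(\oscn)=\{id\}$), and using that $\iota,\iota^{-1}\in Aut(\oscn)$, one gets $\phi\in Aut(\oscn)$ iff $k=\phi\iota^{-1}\in Aut(\oscn)$. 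Hence
$$F(\oscn)\cap Aut(\oscn)=\bigl(K(\oscn)\cap Aut(\oscn)\bigr)\cdot Int(\oscn),$$
and it suffices to prove $K(\oscn)\cap Aut(\oscn)=\tilde{K}$. The reduction rests on one criterion: for $\phi\in F(\oscn)$ one has $\phi\in Aut(\oscn)$ if and only if $d\phi_e$ is a Lie algebra automorphism of $\mathfrak{osc}_n(\lambda_1,\dots,\lambda_n)$. Indeed, if $d\phi_e$ is a Lie algebra automorphism then, as $\oscn$ is simply connected, there is a unique group automorphism $\psi$ with $d\psi_e=d\phi_e$; since $d\psi_e$ preserves $Q_e$ and $\psi$ is an automorphism, bi-invariance forces $\psi$ to be an isometry, and two isometries fixing $e$ with the same differential there coincide, so $\phi=\psi\in Aut(\oscn)$. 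The converse is immediate.

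Next I would apply this to $\Theta(B)$, whose differential is $d\Theta(B)_e=\mathrm{diag}(1,B,1)$ in the basis $Z,\{X_i,Y_i\},T$. Imposing the automorphism identities on the brackets $[X_i,Y_i]=Z$ and $[T,\cdot]=N_\lambda(\cdot)$ translates into $B^{\tau}JB=J$ (that is, $B$ symplectic for $J=N_{(-1,\dots,-1)}$) together with $BN_\lambda=N_\lambda B$. Because $B$ is block-orthogonal, $B_\nu\in\mathrm{O}(2m_\nu)$, and because the restriction of the metric to each $\lambda$-block is a scalar multiple of the standard one, the classical ``two out of three'' relation $\mathrm{O}(2m_\nu)\cap Sp(2m_\nu)=\mathrm{U}(m_\nu)$ applies: for block-orthogonal $B$ the conditions ``symplectic'' and ``commutes with $N_\lambda$'' coincide, and both reduce to $B_\nu\in\mathrm{O}(2m_\nu)\cap Sp(2m_\nu)$, which is exactly the defining condition of $\tilde{K}_1$. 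This gives $K_1(\oscn)\cap Aut(\oscn)=\tilde{K}_1$.

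For the other component, $d(s\Theta(B))_e=\mathrm{diag}(-1,-B,-1)$ because $ds_e=-\mathrm{Id}$, so the same bracket computation now yields the anti-linear conditions $B^{\tau}JB=-J$ and $BN_\lambda=-N_\lambda B$. The role of the matrix $M$ is precisely that it is orthogonal and satisfies $M^{\tau}JM=-J$ and $MN_\lambda=-N_\lambda M$ (a short $2\times2$-block check, using $M^2=\mathrm{Id}$). Setting $C:=MB$, one computes $C^{\tau}JC=B^{\tau}(M^{\tau}JM)B=-B^{\tau}JB=J$ and $CN_\lambda=M(-N_\lambda B)=N_\lambda MB=N_\lambda C$, so $C$ is orthogonal, symplectic and commutes with $N_\lambda$; hence $\Theta(C)\in\tilde{K}_1$ and, since $\Theta$ is a homomorphism on $K_1(\oscn)$ with $B=MC$, we get $s\Theta(B)=s\Theta(M)\Theta(C)\in s\Theta(M)\tilde{K}_1$. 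Conversely each $s\Theta(M)\Theta(C)$ with $\Theta(C)\in\tilde{K}_1$ is a composition of automorphisms, hence lies in $Aut(\oscn)$. Therefore $sK_1(\oscn)\cap Aut(\oscn)=s\Theta(M)\tilde{K}_1$, and combining the two disjoint components (disjoint since they act oppositely on $Z$) gives $K(\oscn)\cap Aut(\oscn)=\tilde{K}_1\cup s\Theta(M)\tilde{K}_1=\tilde{K}$, which completes the proof.

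The hard part is not any single calculation but the conceptual organization: making the reduction criterion airtight (so that being an automorphism can be tested entirely on $d\phi_e$), and then recognizing that the block orthogonality already built into $K_1(\oscn)$ combines with the symplectic, respectively anti-symplectic, bracket conditions to pin down exactly the unitary class $\tilde{K}_1$ and its single anti-unitary coset $s\Theta(M)\tilde{K}_1$. In particular, the one genuinely clever point is that the entire anti-symplectic case factors through the fixed matrix $M$, which is what lets the answer be written as $\tilde{K}_1\cup s\Theta(M)\tilde{K}_1$ rather than as a larger, unstructured set.
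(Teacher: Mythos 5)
Your proof is correct; note, however, that the paper itself offers no proof of this proposition — it is stated as part of the block of results on $F(\oscn)$ reproduced from Bourseau's thesis \cite{Bou} — so your proposal is being measured against a citation rather than an argument, and it in fact supplies the missing details. Every step checks out against the paper's setup. The reduction $F(\oscn)\cap Aut(\oscn)=\bigl(K(\oscn)\cap Aut(\oscn)\bigr)\cdot Int(\oscn)$ needs only $Int(\oscn)\subseteq Aut(\oscn)$ together with Theorem \ref{Bou} (uniqueness of the decomposition $\phi=k\,\iota$ is not actually required: from $\iota\in Aut(\oscn)$ one gets $k=\phi\iota^{-1}\in Aut(\oscn)$ directly). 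Your criterion that membership in $Aut(\oscn)$ can be tested on $d\phi_e$ is legitimate because the paper records, after M\"uller \cite{MU}, that $\phi\mapsto d\phi_e$ is an isomorphism $F(\oscn)\to F(\mathfrak g)$; injectivity of this map is exactly what identifies $\phi$ with the integrated automorphism $\psi$ (and, as a side remark, your verification that an automorphism whose differential preserves $Q_e$ is an isometry uses only left-invariance, not bi-invariance). The bracket conditions are exactly right: with $[X_i,Y_j]=\delta_{ij}Z$ the symplectic form of the bracket is $v^{\tau}Jw$ for the paper's $J=N_{(-1,\dots,-1)}$, and one gets $B^{\tau}JB=\varepsilon J$, $BN_\lambda=\varepsilon N_\lambda B$ with $\varepsilon=1$ for $\Theta(B)$ and $\varepsilon=-1$ for $s\circ\Theta(B)$; since $N_\lambda$ restricted to the $\nu$-th block is $\rho_\nu$ times the standard complex structure, for block-orthogonal $B$ both $\varepsilon=1$ conditions are indeed equivalent and reduce to $B_\nu\in\mathrm{O}(2m_\nu)\cap Sp(2m_\nu)$, i.e.\ to $\tilde{K}_1$, via the two-out-of-three identity. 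The factorization of the anti-symplectic case through $M$ is also sound: $M$ is orthogonal with $M^{\tau}JM=-J$ and $MN_\lambda=-N_\lambda M$, and $\Theta(M)\Theta(C)=\Theta(MC)$ holds because both maps conjugate by the same $P(t)$. The one wording to repair is the converse: ``each $s\Theta(M)\Theta(C)$ is a composition of automorphisms'' is not literally true as parsed, since $s$ is an anti-automorphism and $\Theta(M)$ alone is not an automorphism ($M$ is anti-symplectic); but $s\circ\Theta(M)$ itself \emph{is} an automorphism by your own criterion, its differential $\mathrm{diag}(-1,-M,-1)$ satisfying the $\varepsilon=-1$ conditions, and stated that way the inclusion $s\Theta(M)\tilde{K}_1\subseteq Aut(\oscn)$ is immediate. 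With that phrasing fixed, the argument is a complete and self-contained proof of the proposition the paper leaves to \cite{Bou}.
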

Finally Bourseau studied the isometry group of $\oscn$. He found that
the Lie algebra of $Iso(\oscn)$ is the semidirect product
$$\mathfrak{iso}(\oscn)=(\times_{\nu=1}^p \mathfrak{so}(2m_{\nu}))\ltimes \mathfrak g_{2n},$$
where $\mathfrak{g}_{2n}$ is oscillator algebra of dimension $4n+2$. 

Thus, the isometry group follows 
$$Iso(\oscn)=L(\oscn) F(\oscn)$$
 and it holds $L(\oscn)\cap F(\oscn)=\{id\}.$

\begin{rem}
	Note that since the inversion map $h\to h^{-1}$ is a isometry of the Lie group  $(\oscn, \lela\,,\,\rira)$, then the compact spaces $\Lambda \backslash \oscn$ and $\oscn/\Lambda$ are isometric. In fact the map $x\Lambda \to \Lambda x^{-1}$ is a isometry between both spaces. Clearly $\oscn$ acts on $\oscn/\Lambda$ on the left transitively.
\end{rem}

\subsection{Isometries in the quotients. } The aim now is to study the isometry group of the quotient spaces. Let $\Lambda$ denote a discrete subgroup of $\oscn$ such that  $M=\oscn/\Lambda$ is a compact space. Since the metric on $\oscn$ is both, right and left-invariant, it can be induced to the cosets $g\Lambda\in M$. Indeed a isometry of $M$ gives rise to a local isometry in $\oscn$, since the projection $\oscn \to M$ is a submersion which is a local isometry. Thus, one has a local isometry of $\oscn$ satisfying conditions in the paragraphs above. 

On the other hand, some isometries of $\oscn$ can be induced to the quotient. The next result explicit conditions of such maps. 

\begin{defn} Let $f$ be  an isometry of $\oscn$, and $\Lambda$ a lattice. We say that $f$ is {\em fiber preserving} if $f(g)^{-1} f(g\lambda) \in\Lambda$ for all $g \in \oscn$, and for every $\lambda\in \Lambda$. 
\end{defn}
		If $f$ is a fiber preserving
isometry, it induces an isometry  $\tilde{f}$ on the compact space $M=\oscn\backslash \Lambda$   by defining $\tilde{f}(g\Lambda) = f(g)\Lambda$.

\begin{obs} Note the following facts. 
	\begin{itemize}
		\item Translations on the left by elements of group, are fiber preserving maps. Every map $L_h$ induces the isometry $\tau_h$ in $M$. In particular $L_{\lambda}(\Lambda)\subseteq \Lambda$ for every $\lambda\in \Lambda$. Denote by  $\tilde{L}(M)=\{\tau_g:g\in \oscn\}$. 
		\item If $f$ is a fiber preserving map that fixes the identity, then $f(\lambda)\in \Lambda$, for every $\lambda$ in the lattice $\Lambda$. 
	\end{itemize}
	
\end{obs}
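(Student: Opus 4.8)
The plan is to verify each of the two recorded facts by a direct computation, invoking the assertion stated just before the observation: a fiber preserving isometry $f$ descends to a well defined isometry $\tf$ of $M$ through $\tf(g\Lambda)=f(g)\Lambda$.

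For the first item I would apply the definition of fiber preserving to a left translation $L_h$ with $h\in\oscn$. Since $L_h(g)=hg$, one computes
$$L_h(g)^{-1}L_h(g\lambda)=(hg)^{-1}(hg\lambda)=g^{-1}h^{-1}hg\lambda=\lambda\in\Lambda$$
for every $g\in\oscn$ and every $\lambda\in\Lambda$, so $L_h$ is fiber preserving. The induced map is then read off directly: $\tf(g\Lambda)=L_h(g)\Lambda=hg\Lambda=\tau_h(g\Lambda)$, whence the descended isometry is exactly $\tau_h$ and $\tilde{L}(M)=\{\tau_g:g\in\oscn\}$. The last assertion $L_\lambda(\Lambda)\subseteq\Lambda$ for $\lambda\in\Lambda$ follows from closure of the subgroup $\Lambda$ under the group product, since $L_\lambda(\Lambda)=\lambda\Lambda\subseteq\Lambda$.

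For the second item I would specialize the fiber preserving condition to $g=e$, obtaining $f(e)^{-1}f(\lambda)\in\Lambda$ for every $\lambda\in\Lambda$; using the hypothesis $f(e)=e$ this collapses to $f(\lambda)\in\Lambda$, as claimed. Neither step is a genuine obstacle, both being one line group computations. The only point that needs care is that writing $\tf=\tau_h$ presupposes that $\tf$ is well defined on cosets and isometric, which is precisely the preceding assertion and may be used without further argument; I would therefore present the observation as two short verifications rather than as a theorem needing a separate strategy.
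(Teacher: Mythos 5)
Your verification is correct and is exactly the intended (and essentially the only) argument: the paper states these facts without proof precisely because they follow in one line from the definition of fiber preserving, as your computations $L_h(g)^{-1}L_h(g\lambda)=\lambda\in\Lambda$ and the specialization $g=e$, $f(e)=e$ show. Your care in noting that writing $\tf=\tau_h$ rests on the previously stated descent assertion is appropriate and consistent with the paper's logic.
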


Recall that whenever the lattices $\Lambda_1$ and $\Lambda_2$ are not pairwise isomorphic, they determine non-diffeomorphic solvmanifolds (see for instance \cite{Ra}).

One can study the isometry group of $\oscn/\Lambda$ once one has information about the isometry group of $\oscn$. The following result is consequence of the Lifting theorem. The proof can be seen in \cite{BOV}.

\begin{thm} Let $G$ be an arcwise-connected, simply connected Lie group with a bi-invariant metric and $\Lambda$ be a discrete subgroup
of $G$. Then every isometry $f$ of $G/\Lambda$ is induced to $G/\Lambda$ by a fiber preserving isometry of $G$.
\end{thm}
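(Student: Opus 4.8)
The plan is to deduce the result from covering space theory, using in an essential way that $G$ is simply connected. Since $\Lambda$ is discrete it acts freely and properly discontinuously on $G$ by right translations, so the projection $\pi\colon G\to G/\Lambda$ is a covering map; because the metric on $G$ is bi-invariant (in particular right-$\Lambda$-invariant), $\pi$ is moreover a local isometry. Given an isometry $f$ of $G/\Lambda$, I would consider the composition $f\circ\pi\colon G\to G/\Lambda$, which is again a local isometry. Fixing $e\in G$ and a point $g_0\in\pi^{-1}(f(\pi(e)))$, the lifting criterion for coverings applies: since $G$ is simply connected, $(f\circ\pi)_*\pi_1(G)=0\subseteq\pi_*\pi_1(G)$, so there is a unique smooth map $F\colon G\to G$ with $F(e)=g_0$ and $\pi\circ F=f\circ\pi$.

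Next I would check that $F$ is a global isometry of $G$. Locally $\pi$ admits smooth inverses, and on each such neighbourhood $F$ coincides with $\pi^{-1}\circ f\circ\pi$, a composition of local isometries; hence $F$ is a local isometry. To promote this to a bijection, apply the same lifting to $f^{-1}$, obtaining $F'\colon G\to G$ with $\pi\circ F'=f^{-1}\circ\pi$. Then $\pi\circ(F'\circ F)=\pi$ and $\pi\circ(F\circ F')=\pi$, so $F'\circ F$ and $F\circ F'$ are lifts of $\pi$ through $\pi$. By uniqueness of lifts on the connected domain $G$, every such lift is a deck transformation of $\pi$, and the deck group is exactly the group of right translations $R_\lambda$ with $\lambda\in\Lambda$. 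In particular $F'\circ F$ and $F\circ F'$ are bijections, which forces $F$ to be bijective; being a bijective local isometry, $F$ is an isometry of $G$.

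It remains to verify that $F$ is fiber preserving and induces $f$. For any $g\in G$ and $\lambda\in\Lambda$ the relation $\pi\circ F=f\circ\pi$ gives $\pi(F(g\lambda))=f(\pi(g\lambda))=f(\pi(g))=\pi(F(g))$, so $F(g\lambda)$ and $F(g)$ lie in the same fiber; that is, $F(g)^{-1}F(g\lambda)\in\Lambda$, which is precisely the fiber-preserving condition. Consequently $F$ induces an isometry $\tilde{F}$ of $G/\Lambda$, and $\tilde{F}(g\Lambda)=F(g)\Lambda=\pi(F(g))=f(\pi(g))=f(g\Lambda)$, so $\tilde{F}=f$, as required.

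The only delicate point is the passage from a local to a global isometry: a priori the lift $F$ is merely a local isometry, and without further input one cannot conclude that it is surjective. The deck-transformation argument above, equivalently the construction of a two-sided inverse from the lift of $f^{-1}$, is what settles this, and it has the advantage of not invoking geodesic completeness of $G$. Keeping the basepoints straight in the lifting criterion (choosing $g_0$ in the fiber over $f(\pi(e))$) is the other point to watch, but it is routine. The smoothness of $f$ needed at the outset is guaranteed by the standard regularity of pseudo-Riemannian isometries.
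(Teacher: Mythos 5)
Your proof is correct and takes essentially the same approach as the paper: the paper gives no argument of its own but states the theorem is a consequence of the Lifting theorem and refers to \cite{BOV}, and your covering-space argument --- lifting $f\circ\pi$ through the covering $\pi\colon G\to G/\Lambda$ using simple connectedness, then using the lift of $f^{-1}$ and uniqueness of lifts to identify the compositions with deck transformations (right translations by $\Lambda$) and so obtain bijectivity --- is precisely that lifting argument written out in full. The fiber-preserving property and the fact that $F$ induces $f$ follow, as you note, immediately from $\pi\circ F=f\circ\pi$, so there are no gaps.
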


In view of that we proceed to study the fiber preserving isometries of $G$, specifically, those in the isotropy subgroup.

Analogously as in \cite{BOV}, computations show that neither the inversion map $s$, nor the map $\Theta(B)$ are fiber preserving.
In fact, to see that assume $\lambda=(\tilde{z},\tilde{v},\tilde{t})\in \Lambda$.  By computing 
		$(z,v,t)(\tilde{z},\tilde{v},\tilde{t})^{-1}(-z,-R(-t)v,-t)$ and looking at the component in $\mathbb R^{2n}$, one obtains 
				
		$v-R(-\tilde{t})v-R(t-t_1)\tilde{v},$ 
		
		which must belong to $\Lambda\cap \mathbb R^{2n}$ for every $v\in  \mathbb R^{2n}$ and $t\in \mathbb R$. In particular for $v=0$ one gets that for all $t\in \mathbb R$, it holds $R(t-t_1)\tilde{v}\in \Lambda\cap \mathbb R^{2n} \subset \Lambda$ which is countable set. This is contradiction. And analogously with $\Theta(B)$. 

Thus, it rest to determine which inner autormophisms are fiber preserving. Let $I_h: \oscn \to \oscn$ the conjugation map, then
$$I_h(g)^{-1}I_h(g\lambda)=hg^{-1}h^{-1} hg\lambda h^{-1}= h\lambda h^{-1}\in\Lambda,$$ for every $\lambda\in \Lambda$. The condition above says that  
  $h \in N_G(\Lambda)$, the normalizer of the lattice $\Lambda$ in $\oscn$. 

Since any isometry $f$ of the Lie group can be written as $f=L_p\circ g$ with $g$ is a isometry fixing the isometry element, we have that any isometry in the quotient space $M=\oscn/\Lambda$ can be written as $\tilde{f}=\tau_g\circ \tilde{h}$, where $\tilde{h}$ denotes the isometry induced to the quotient by $h$, $\tilde{h}(g\Lambda)=h(g)\Lambda$.

Consider the following homomorphisms where $G=\oscn$:
\begin{itemize}
	\item $\widetilde{I}:N_G(\lambda) \to Iso(M)$ given by $\widetilde{I}(h)=\widetilde{I}_h$ and
	\item $\tau: G \to Iso(M)$ which gives $\tau(g)=\tau_g$.
\end{itemize}

By Isomorphism Theorem, one has $\tilde{L}(M)=Im\tau=G/\ker\tau$ and $\ker\tau$ contains the elements in the intersection of the center and $\Lambda$: $Z(\oscn)\cap \Lambda$, implying that $\tau$ is not injective. 

On the other hand it is easy to see for any $h\in Z(\oscn)$ one has $I_h(x)=x$, so that $h\in \ker \widetilde{I}$. And in this case $\widetilde{I}$ is not injective. 

In any case, to specify those statements one needs more information about $\Lambda$.

\begin{prop}
 Let $\Lambda$ be a lattice in $\oscn$. The isometries in the Lie group that are fiber preserving correspond to translations on the left by elements of the group and the inner automorphisms $I_h$ for $h\in N_G(\Lambda)$. Moreover, 
 any isometry $f$ in $M=\oscn/\Lambda$ can be written as $f=\tau_g\circ \widetilde{I}_h$ but this  is not necessarily unique. 
\end{prop}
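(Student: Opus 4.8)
The plan is to assemble this proposition from the pieces already established in the preceding discussion, treating it as a summary/synthesis result rather than one requiring fresh computation. The statement has two parts: first, a characterization of exactly which isometries of $\oscn$ are fiber preserving; second, a decomposition of an arbitrary isometry of $M=\oscn/\Lambda$ together with a non-uniqueness remark. First I would invoke the decomposition $Iso(\oscn)=L(\oscn)F(\oscn)$ so that every isometry $f$ of the group splits as $f=L_p\circ \phi$ with $\phi\in F(\oscn)$. By Theorem \ref{Bou} we have $F(\oscn)=K(\oscn)\cdot Int(\oscn)$, so $\phi$ is a product of a piece in $K(\oscn)$ (generated by the maps $\Theta(B)$ and the inversion $s$) and an inner automorphism $I_h$.

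Next I would reduce the fiber-preserving question on the isotropy part to the two computations already carried out just above the statement. The argument that neither $s$ nor $\Theta(B)$ is fiber preserving is exactly the displayed calculation showing that $R(t-t_1)\tilde v$ would have to lie in the countable set $\Lambda\cap\mathbb R^{2n}$ for all $t\in\mathbb R$, which is impossible. Hence the only isotropy isometries that survive are the inner automorphisms, and for those the identity $I_h(g)^{-1}I_h(g\lambda)=h\lambda h^{-1}$ shows that fiber preservation is equivalent to $h\lambda h^{-1}\in\Lambda$ for all $\lambda\in\Lambda$, i.e.\ $h\in N_G(\Lambda)$. Combined with the observation that every left translation $L_h$ is automatically fiber preserving, this establishes the first assertion: the fiber preserving isometries are precisely the left translations and the $I_h$ with $h\in N_G(\Lambda)$.

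For the second assertion I would use the Lifting theorem (the theorem stated just before this subsection), which guarantees that every isometry $f$ of $M$ is induced by a fiber preserving isometry $\hat f$ of $\oscn$. Writing $\hat f=L_g\circ I_h$ according to the first part and passing to the quotient via the correspondences $L_g\mapsto\tau_g$ and $I_h\mapsto\widetilde I_h$ already introduced, one obtains $f=\tau_g\circ\widetilde I_h$. Finally, the non-uniqueness is explained by the kernels already identified: $\tau$ fails to be injective because $Z(\oscn)\cap\Lambda\subseteq\ker\tau$, and $\widetilde I$ fails to be injective because any central $h$ gives $I_h=\mathrm{id}$ and so lies in $\ker\widetilde I$; thus different pairs $(g,h)$ can yield the same $f$.

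The main obstacle, and the point where I would be most careful, is the reduction step in which an arbitrary element of $K(\oscn)\cdot Int(\oscn)$ is shown to be fiber preserving only when its $K(\oscn)$-component is trivial. The calculation above disposes of the pure generators $s$ and $\Theta(B)$ in isolation, but a general element of $F(\oscn)$ is a product $\kappa\cdot I_h$ with $\kappa\in K(\oscn)$, and one must rule out the possibility that a nontrivial $\kappa$ becomes fiber preserving after composition with some inner automorphism. I would handle this by using the normality of $Int(\oscn)$ in $F(\oscn)$ and the commutation relations $(i)$--$(iii)$ of Theorem \ref{Bou} to absorb the inner factor and reduce to the behaviour of $\kappa$ alone on the $\mathbb R^{2n}$-component, where the same countability obstruction applies; making this reduction fully rigorous, rather than merely treating the generators, is the delicate part of the proof.
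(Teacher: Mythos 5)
Your proposal follows the paper's own route essentially verbatim: the decomposition $Iso(\oscn)=L(\oscn)F(\oscn)$ together with Theorem \ref{Bou}, the countability/discreteness computation ruling out the inversion $s$ and the maps $\Theta(B)$, the identity $I_h(g)^{-1}I_h(g\lambda)=h\lambda h^{-1}$ characterizing fiber preservation as $h\in N_G(\Lambda)$, the Lifting theorem to write any isometry of $M$ as $\tau_g\circ\widetilde{I}_h$, and the non-injectivity of $\tau$ and $\widetilde{I}$ (via $Z(\oscn)\cap\Lambda$ and central $h$) for the non-uniqueness. The delicate point you flag at the end --- that one must exclude fiber preservation for a general product $\kappa\cdot I_h$ with $\kappa\in K(\oscn)$ nontrivial, not merely for the bare generators $s$ and $\Theta(B)$ --- is in fact left implicit in the paper as well, so your suggestion to absorb the inner factor using the normality of $Int(\oscn)$ and relations (i)--(iii) of Theorem \ref{Bou}, and then rerun the countability obstruction on the $\mathbb R^{2n}$-component, is if anything more careful than the published argument.
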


\begin{rem} For the subgroups $\Lambda$ in Example \ref{Lattice4}, it was proved in \cite{BOV}  that 
	
	$\tilde{L}(M)\cap Im \tilde{I}=\{\tau_Z\circ\widetilde{I}_{\lambda}, \mbox{ for } Z\in Z(G), \lambda\in \Lambda_{k,s}\}$. 
	\end{rem}
	
	\begin{exa}
	In dimension four one may consider the lattices $\Lambda_{k,0}, \Lambda_{k,\pi}, \Lambda_{k,\pi/2}$ in the oscillator group $G$ of dimension four with $\lambda_1=1$. See Example \ref{Lattice4}. As said in \cite{BOV}, since the subgroups $\Lambda_{k,j}$ are not pairwise isomorphic, the corresponding compact spaces are not homeomorphic.

To	compute the normalizers of such lattices, one may  may find the elements $(z,v,t)$ such  that
	$(z,v,t)(\tilde{z},\tilde{v},\tilde{t})(-z,-R(-t)v,-t)\in \Lambda$, for  all $(\tilde{z},\tilde{v},\tilde{t})\in \Lambda$, where $\Lambda$ is a lattice. The proof follows by writing down the coordinates. 
	\begin{itemize}
			\item For $\Lambda_{k,0}$ the map $R(nt_0)$ is the identity for $t_0=2\pi$. Thus, one has $v+R(t)\tilde{v}-R(\tilde{t})v\in \mathbb Z^2$, implies that $R(t)=s\frac{\pi}2$ for $s\in \mathbb Z$.
			
			From the $z$-coordinate one has 
			
			\smallskip
			
			$\tilde{z}+\frac12 v^{\tau} JR(t)\tilde{v} -\frac12 v^{\tau} J R(\tilde{t})v-(R(t)\tilde{v})^{\tau}JR(\tilde{t})v= \frac{u}{2k}$ for some $u\in \mathbb Z$,
			
			\smallskip
			
			which implies $v^{\tau}JR(t)\tilde{v}=\frac{\tilde{u}}{2k}$ for some $\tilde{u}\in \mathbb Z$. Therefore $v\in \frac{1}{2k}\mathbb Z^2$. 
			
			So, we get $N_G(\Lambda_{k,0})=\mathbb R\times \frac{1}{2k}\mathbb Z^2\times \frac{\pi}2 \mathbb Z$. 
			
			\item For $\Lambda_{k,\pi}$ the map $R(2n\pi)$ is the identity  or $R(n\pi)=-Id$ for $n$ odd.
			
			A similar reasoning as above gives $N_G(\Lambda_{k,\pi})=\mathbb R\times \frac{1}{2}(\mathbb Z)^2\times \frac{\pi}2 \mathbb Z$.	
		 \item For $\Lambda_{k,\pi}$ the map $R(n\pi/2)=\pm Id$ if $n$ is even with $-Id$ if $n\equiv 2 mod(4)$. Thus a reasonning as above says that
			$N_G(\Lambda_{k,\pi/2})=\mathbb R\times (\mathbb Z)^2 \times \frac{\pi}2 \mathbb Z$.
		\end{itemize}
	In dimension six the situation is much more complicated, as we show below. 
	\end{exa}
	\subsection{An example in dimension six.} Assume we have the Lie group $Osc(1,\lambda)$ which has the differentiable structure of $\mathbb R^6$. As said in Lemma \ref{lema_medina}, to have a cocompact lattice, one needs that the real numbers $1,\lambda$ generate a discrete subgroup of $\mathbb R$. 
		On the other hand it is known that a subgroup $H$ of $\mathbb R$ is either discrete or dense. Moreover, if it is discrete, then $H=\mathbb Z r$ being $r=inf(H\cap \mathbb R_{>0})>0$. Thus, we are  in  the last situation. 
	This implies that there exist $n\in \mathbb Z$ such that $1=nr$ saying that $r\in \mathbb Q$. Analogously, since there exists $s\in \mathbb Z$ such that $sr=\lambda$ we have that $\lambda \in \mathbb Q$. 
	
Thus for the corresponding Lie group we have for some $r\in \mathbb Q$, the map $R(t)$ has a matrix presentation as follows:
$$R(t)=\left( \begin{matrix} 
\cos(t) & -\sin(t) & 0 & 0\\
\sin(t) & \cos(t) &0 & 0\\
0 & 0 & \cos(r t) & -\sin(rt)\\
0 & 0 & \sin(rt) & \cos(rt)
\end{matrix}
\right)$$
 
 Note that if $r=p/q$ with $p$ and $q$ coprime numbers, then $t_0=q$ generates a subgroup of $\mathbb Z$ and both $\cos(2sq\pi)(m),\sin(2sq\pi)(m)\in \mathbb Z$ for every $s,m\in \mathbb Z$ and also $\cos(2sp\pi)(m),\sin(2sp\pi)(m)\in \mathbb Z$ for all $s,m\in \mathbb Z$. 
 
 Thus, $R(2st_0\pi)(\mathbb Z\times \mathbb Z\times \mathbb Z\times \mathbb Z)\subseteq \mathbb Z\times \mathbb Z\times \mathbb Z\times \mathbb Z$, which says that 
    
 $\Lambda_{k,0}=\frac1{2k} \mathbb Z\times \mathbb Z^4 \times 2q\pi\mathbb Z$ 
 
 is a cocompact lattice of $\mathrm{Osc}(1,p/q)$ for $k\in \mathbb N$.
 
 Analogously, one proves that the following set is also a cocompact lattice:
 
  $\Lambda_{k,\pi}=\frac1{2k} \mathbb Z\times \mathbb Z^4 \times q\pi\mathbb Z$. 
  
  In this case $\cos(sq\pi)(m)=\pm m,\sin(sq\pi)(m)=\pm m$ depending on the parity of $sq$, and  $\cos(2sp\pi)(m)=\pm m,\sin(2sp\pi)(m)=\pm m$  depending on the parity of $sp$. In any case, $\cos(2sq\pi)(m),\sin(2sq\pi)(m)\in \mathbb Z$. A similar reasoning applies for  the lattice 
  
  $\Lambda_{k,\pi/2}=\frac1{2k} \mathbb Z\times \mathbb Z^4 \times q\frac{\pi}2 \mathbb Z$.
  
  In this way, we generalize the lattices considered in \cite{BOV}, for every $k\in \mathbb N$, to 
  $$\Lambda_{k,q,M} = \frac{1}{2 k} \Z \times \Z^4 \times \frac{2 \pi q}{M} \Z, \quad \mbox{for} \, M=1,2,4.$$

 Note that one obtains three distinct families for $k, q \in \NN$  when:
 \begin{enumerate}
     \item $\forall q, k$, when $M=1$,
     \item $q$ odd, when $M=2$,
     \item $q$ odd,  when $M=4$
 \end{enumerate}

Given $g=(z,v,t) \in Osc_2(1,p/q)$ and $\lambda=(a,\eta,b) \in \Gamma$ any of the lattices above, computing $g \lambda g^{-1}$ gives
\begin{equation}
    (a + \frac{1}{2}v^T J R(t) \eta - \frac{1}{2} v^T J R(b) v - \frac{1}{2} \eta^T R(t)^t J R(b) v, v + R(t) \eta - R(b) v, b)
\end{equation}.

In order to have $g \Lambda g^{-1} \subset \Lambda$, one gets the conditions on $z,v,t$ as follows:

\begin{align}
    v + R(t) \eta - R(\frac{2 \pi q}{M} c)v &\in \Z^4   \label{c1} \\ 
    \frac{1}{2} \left[ v^T J R(t) \eta - v^T J R(b) v - \eta^T R(t)^TJR(\frac{2 \pi q}{M} c) v \right] &\in \ZZ/{2k}, \label{c2}
\end{align}
 for any $ \eta \in \RR^4$, and $ b \in \frac{2 \pi q}{M} \Z$ with $c = 0, ..., M-1$. \\

For the case $c=0$ in Equation (\ref{c1}) leads to $R(t) \eta \in \Z^4$, and consequently 
\begin{equation}
t \in \frac{q \pi}{2} \ZZ. \label{cc1}
\end{equation}

Similarly $c=0$ in Equation (\ref{c2}) implies that $$ v^T J R(t) \eta \in Z / {2 k} \,\, \forall \eta \in \ZZ^4 $$     
and consequently 
\begin{equation}
    v \in \ZZ^4/{2k}. \label{cc2}
\end{equation}

For $c = 1, ..., M-1$, Equations (\ref{c1}) and (\ref{c2}) reduce to

\begin{align}
    v - R(\frac{2 \pi q}{M}c) v &\in \Z^4  \label{ccc1} \\
    (v^T + v^T R^T(\frac{2 \pi q}{M}c)) J R(t) \eta - v^T J R(\frac{2 \pi q}{M}c) v  &\in \ZZ/{k} \nonumber
\end{align}

The latter equation can be simplified noticing that $\eta = 0$ implies 
\begin{equation}
    v^T J R(\frac{2 \pi q}{M}c)v \in \Z/k, \label{ccc2}
\end{equation}
and therefore $(v^T + v^T R^T(\frac{2 \pi q}{M}c)) J R(t) \eta \in \Z / k$, 
which is equivalent to:

\begin{equation}
    v + R(\frac{2 \pi q}{M}c) v \in \Z^4 / k \label{ccc3}
\end{equation}

Consider the following set definitions, which are useful for the next lemma and proposition:
$$\mathcal{C} := \{ (p,q,k,M): p,q,k \in \NN, \mbox{ p and q coprime}, M \in \{ 1,2,4\}, \mbox{ q odd when $M > 1$} \},$$
$$F := \frac{1}{2}\{ z \in \ZZ : z \,\, odd \},$$
$$I_2 := \ZZ^2 \cup F^2,$$
$$I_4 := \ZZ^4 \cup F^4.$$

\begin{lem}
     Let $(p,q,k,M) \in \mathcal{C}$, then $(z, (v_1,v_2,v_3,v_4),t)$ is an element of the normalizer of $\Lambda_{k,q,M}$ in $Osc(1,p/q)$ if it satisfies the next conditions

    \def\arraystretch{1.5}
    \begin{center}
    \begin{tabular}{|c|c|}\hline
    \multirow{2}{*}{M=1,2,4} & $t \in \frac{q \pi}{2} \mathbb{Z}$   \\\cline{2-2}
        & $(v_1,v_2,v_3,v_4) \in \ZZ^4 / {2 k}$  \\\hline
    \multirow{2}{*}{M=2} & $(v_1,v_2) \in \ZZ^2/2$  \\\cline{2-2}
        & $(v_3,v_4) \in \ZZ^2/2$ if p odd   \\\hline
    \multirow{4}{*}{M=4} & $(v_1,v_2) \in I_2$  \\\cline{2-2}
        & $(v_1,v_2) \in \ZZ^2 \, \text{if p even and k odd}$  \\\cline{2-2}
        & $(v_3,v_4) \in I_2 \, \text{if p odd}$  \\\cline{2-2}
        & $(v_1,v_2,v_3,v_4) \in I_4 \, \text{if p odd and k odd}$  \\\hline
    \end{tabular}
    \end{center}
    
\end{lem}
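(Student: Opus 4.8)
The plan is to reduce normalizer membership to the finitely many congruence and rotation conditions already extracted in Equations (\ref{c1})--(\ref{ccc3}), and then to solve them blockwise. Since the conjugation $I_{(z,v,t)}$ depends only on $(v,t)$, the first coordinate $z$ is unconstrained, which is why the table involves only $v$ and $t$. As $\Lambda_{k,q,M}$ is generated by $(\tfrac1{2k},0,0)$, the $(0,e_i,0)$ and $(0,0,\tfrac{2\pi q}{M})$, and conjugation is an automorphism, it suffices to require $g\lambda g^{-1}\in\Lambda_{k,q,M}$ for $\lambda=(a,\eta,b)$ with $\eta\in\ZZ^4$ and $b=\tfrac{2\pi q}{M}c$, $c=0,\dots,M-1$; this is exactly the content of (\ref{c1})--(\ref{c2}). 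I would therefore treat $c=0$ and $c\ge 1$ separately and read off the table entries, the conditions being necessary and sufficient since (\ref{c1})--(\ref{c2}) hold if and only if $g$ normalizes the lattice.

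First I would dispose of $c=0$, valid for every $M$. Requiring $R(t)\eta\in\ZZ^4$ for all $\eta\in\ZZ^4$ forces $R(t)$ to be integral; since its two blocks turn with angles $t$ and $\tfrac{p}{q}t$ and $\gcd(p,q)=1$, this is equivalent to $t\in\tfrac{q\pi}{2}\ZZ$ (Equation (\ref{cc1})), the top row's first entry. Feeding this back into (\ref{c2}) with $c=0$ gives $v^{T}JR(t)\eta\in\ZZ/2k$ for all $\eta\in\ZZ^4$, which forces $v\in\ZZ^4/2k$ (Equation (\ref{cc2})), the second entry. These two conditions are the only ones when $M=1$.

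For $c\ge 1$ the essential computation is the reduction of $R_c:=R(\tfrac{2\pi q}{M}c)$, whose blocks rotate by $\tfrac{2\pi qc}{M}$ and $\tfrac{2\pi pc}{M}$. Using that $q$ is odd I would tabulate these blocks: for $M=2$, $c=1$ gives top block $-I$ and bottom block $(-1)^p I$; for $M=4$ the values $c=1,3$ give a $\pm 90^{\circ}$ top block and a $\pm90^{\circ}$ bottom block when $p$ is odd (and $\pm I$ when $p$ is even), while $c=2$ merely reproduces the $M=2$ data. The integrality condition (\ref{ccc1}), $v-R_c v\in\ZZ^4$, then yields the blockwise membership: a $-I$ block gives $2v\in\ZZ^2$, i.e. the half-integer condition $\ZZ^2/2$, whereas a $\pm 90^{\circ}$ block gives $v_1\pm v_2\in\ZZ$, which is precisely $(v_1,v_2)\in I_2$. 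This produces $(v_1,v_2)\in\ZZ^2/2$ for $M=2$, additionally $(v_3,v_4)\in\ZZ^2/2$ when $p$ is odd, and the stronger $(v_1,v_2)\in I_2$ (resp. $(v_3,v_4)\in I_2$ when $p$ is odd) for $M=4$.

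The subtle part, and where I expect the real work, is the quadratic condition (\ref{ccc2}), $v^{T}JR_c v\in\ZZ/k$, which couples the two blocks and detects the parity of $k$. For a $\pm I$ or $-I$ block the antisymmetry of $J$ kills the contribution, so only the $\pm90^{\circ}$ blocks survive, each contributing $\pm(v_{2j-1}^2+v_{2j}^2)$. A direct parity check shows $v_1^2+v_2^2\in\ZZ$ when $(v_1,v_2)\in\ZZ^2$ and $v_1^2+v_2^2\in\ZZ+\tfrac12$ when $(v_1,v_2)\in F^2$, and likewise for $(v_3,v_4)$. Since a half-integer lies in $\tfrac1k\ZZ$ only when $k$ is even, (\ref{ccc2}) with $k$ odd forbids the $F^2$-option exactly where a single $90^{\circ}$ block appears: for $M=4$, $p$ even it rules out $(v_1,v_2)\in F^2$, giving $(v_1,v_2)\in\ZZ^2$; for $M=4$, $p$ odd both blocks contribute, and the condition forces the two pairs into the \emph{same} parity class, i.e. $(v_1,v_2,v_3,v_4)\in I_4$. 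Condition (\ref{ccc3}) is then checked to add nothing beyond (\ref{cc2}). Finally I would verify sufficiency: any $(z,v,t)$ meeting all tabulated conditions satisfies (\ref{c1})--(\ref{c2}) for every admissible $c$, hence $g\,\Lambda_{k,q,M}\,g^{-1}\subseteq\Lambda_{k,q,M}$. The main obstacle is bookkeeping: keeping the two rotation speeds, the sign ambiguities in the $\pm90^{\circ}$ blocks (harmless, since only $v_i^2+v_j^2$ enters), and the interaction of the $F^2$-parity with the divisor $k$ all consistent across the cases.
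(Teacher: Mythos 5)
Your proof is correct and follows essentially the same route as the paper's: both restrict the conjugation conditions (\ref{c1})--(\ref{c2}) to the lattice generators, decompose $R(\tfrac{2\pi q}{M}c)$ into its two $2\times 2$ rotation blocks ($\pm I$ or rotation by an odd multiple of $\pi/2$), read off $\ZZ^2/2$ resp. $I_2$ from the linear conditions, and settle the parity-of-$k$ cases via the quadratic condition $v^{\tau}JR_c v\in\ZZ/k$, observing that (\ref{ccc3}) adds nothing beyond (\ref{cc2}) and (\ref{ccc1}). The only differences are presentational: you make explicit the reduction to generators and the claim that the conditions are equivalent to normalizer membership, both of which the paper leaves implicit.
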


\begin{proof}
    As seen above, given $(z,v,t)$ in the normalizer, then $v$ must satisfy Equations (\ref{cc1}) and (\ref{cc2}) for any $M=1,2,4$ and Equations (\ref{ccc1}), (\ref{ccc2}) and (\ref{ccc3}) for $M=2,4$. For this proof $A=(v_1,v_2)$ and $B=(v_3,v_4)$ are defined.\\

    For $M=2$ and $M=4$, notice that the subspaces $\R^2 \times {(0,0)}$ and ${(0,0)} \times \R^2$ are invariant under the linear operators $J$ and $R(\frac{2 \pi q}{M}c)$ and therefore the linear equations (\ref{ccc1}) and (\ref{ccc3}) result equivalent to 

    \begin{align}
    A& - r(\frac{2 \pi q}{M}c) A \in \ZZ^2 \label{ab1}\\
    B& - r(\frac{2 \pi p}{M}c) B \in \ZZ^2 \label{ab2}\\
    A& + r(\frac{2 \pi q}{M}c) A \in \ZZ^2/k \label{ab3}\\
    B& + r(\frac{2 \pi p}{M}c) B \in \ZZ^2/k \label{ab4},
    \end{align}

where $r(t) := \left( \begin{matrix} 
\cos(t) & -\sin(t)\\
\sin(t) & \cos(t)
\end{matrix}
\right)$. With this in mind and recalling from Section \ref{section1} that $J_1 = \left( \begin{matrix} 
0 & -1\\
1 & 0
\end{matrix}
\right)$, Equation (\ref{ccc2}) can be written as follows

\begin{equation}
    A^{\tau} J_1 r(\frac{2 \pi q}{M}c) A + B^{\tau} J_1 r(\frac{2 \pi p}{M}c) B \in \ZZ/k. \label{ab5}
\end{equation}

\underline{For $(M=2, c=1)$ and $(M=4, c=2)$}: given that $r(\frac{2 \pi n}{M}c) = \pm Id $, Equations (\ref{ab1}) to (\ref{ab4}) result in:

\begin{align}
    2A& \in \ZZ^2 \text{ if q odd} \label{ab6}\\
    2B& \in \ZZ^2 \text{ if p odd} \label{ab7}\\
    2A& \in \ZZ^2/k \text{ if q even} \nonumber \\
    2B& \in \ZZ^2/k \text{ if p even}. \nonumber
\end{align}

Notice that the last two equations are already satisfied by condition: $v \in \ZZ^4/{2k}$. Notice also that Condition (\ref{ab5}) is trivial since the expression is null.\\

\underline{For $M=4, c=1,3$}: Equations (\ref{ab1}) to (\ref{ab4}) can be expressed as follows

\begin{align}
    A& \pm J_1 A \in \ZZ^2 \text{ if q odd} \label{ab8}\\
    B& \pm J_1 B \in \ZZ^2 \text{ if p odd} \label{ab9}\\
    A& \pm J_1 A \in \ZZ^2/k \text{ if q odd} \nonumber \\
    B& \pm J_1 B \in \ZZ^2/k \text{ if p odd}, \nonumber
\end{align}

the last two equations are weaker than the first two so only (\ref{ab8}) and (\ref{ab9}) need to be considered. When condition (\ref{ab8}) applies, it also applies condition (\ref{ab6}) and the solution is $A \in I_2$. Analogously with (\ref{ab7}) and (\ref{ab9}), the solution is $B \in I_2$.

It remains to add the conditions that result from (\ref{ab5}). From now one, assume $q$ odd. Observe that only for $M=4, c=1,3$ the equation is not trivial. Consider first $p$ even, the condition can be expressed  as
$$ v_1^2 + v_2^2 \in \ZZ/k. $$

Since $(v_1,v_2) \in  I_2$ the condition above is valid always when $k$ is even, while if $k$ is odd, $(v_1,v_2) \in \ZZ^2$ is required. 

When $p$ is odd, the condition can be expressed as one of the following two forms, (depending on the precise values of $p$ and $q$).

$$ v_1^2 + v_2^2 \pm (v_3^2 + v_4^2) \in \ZZ/k.$$

Similar to the previous case, if $k$ is even the  condition holds since $(v_1,v_2), (v_3,v_4) \in I_2$. If $k$ is odd $v \in I_4$ is required.

\end{proof}

By using this information we are able to compute the normalizers. 

\begin{prop} The normalizers of the lattices $\Lambda_{k,q,M} \in Osc(1,p/q)$, for $(p,q,k,M) \in \mathcal{C}$, are given in the next table

\begin{center}
\def\arraystretch{1.5}
\begin{tabular}{ |c|c|c| } 
\hline
M & Conditions & Normalizer = $N(\Lambda_{k,1,q})$ \\
\hline
\multirow{1}{1em}{1} & - & $\R \times \Z^4 / {2k} \times q \frac{\pi}{2} \Z$ \\ 
\hline
\multirow{2}{1em}{2} & p even & $\R \times \Z^2/2 \times \Z^2 / {2k} \times q \frac{\pi}{2} \Z$ \\ 
&  p odd & $\R \times \Z^4/2 \times q \frac{\pi}{2} \Z$ \\ 
\hline
\multirow{4}{1em}{4} & p even, k even & $\R \times I_2 \times \Z^2 / {2k} \times q \frac{\pi}{2} \Z$ \\ 
&  p even, k odd & $\R \times \Z^2 \times \Z^2 / {2k} \times q \frac{\pi}{2} \Z$ \\ 
&  p odd, k even & $\R \times I_2 \times I_2 \times q \frac{\pi}{2} \Z$\\ 
&  p odd, k odd & $\RR \times I_4  \times q \frac{\pi}{2} \ZZ$ \\ 

\hline
\end{tabular}
\end{center}

\end{prop}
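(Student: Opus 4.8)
The proof is an assembly of the previous Lemma together with some elementary arithmetic. First I would note that the conditions collected in the Lemma are not only sufficient but also necessary: its proof starts from the inclusion $g\Lambda_{k,q,M}g^{-1}\subseteq\Lambda_{k,q,M}$ and rewrites it as the equivalent system \eqref{cc1}--\eqref{cc2} (for every $M$) and \eqref{ab1}--\eqref{ab5} (for $M=2,4$), so that an element $g=(z,v,t)$ lies in $N(\Lambda_{k,q,M})$ precisely when those conditions hold; indeed, since $I_g$ is an automorphism it preserves the covolume, whence the inclusion $g\Lambda g^{-1}\subseteq\Lambda$ already forces equality. Two features are uniform across all cases: the conjugation map $I_{(z,v,t)}$ is independent of the central coordinate $z$, so $z$ is unconstrained and yields the factor $\RR$ in every row; and the only restriction on the last coordinate is \eqref{cc1}, giving $t\in q\frac{\pi}{2}\ZZ$ throughout.

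What remains is to intersect, in each of the seven cases of the table, the conditions that the Lemma imposes on $v=(v_1,v_2,v_3,v_4)$. The arithmetic facts I would use are: $\ZZ^2/2\subseteq\ZZ^2/{2k}$ and $F\subseteq\frac1{2k}\ZZ$ (both because $\frac12=\frac{k}{2k}$), so that $I_2\subseteq\ZZ^2/{2k}$; that $I_2\cap\ZZ^2=\ZZ^2$; and that $(I_2\times I_2)\cap I_4=I_4$. With these each row follows at once. For $M=1$ only \eqref{cc1}--\eqref{cc2} survive, giving $v\in\ZZ^4/{2k}$. For $M=2$ with $p$ even the extra block condition forces $(v_1,v_2)\in\ZZ^2/2$ while $(v_3,v_4)$ keeps only $v\in\ZZ^4/{2k}$; for $p$ odd both blocks land in $\ZZ^2/2$, i.e. $v\in\ZZ^4/2$. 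For $M=4$ the block conditions place each of $(v_1,v_2),(v_3,v_4)$ in $I_2$; the further constraints $(v_1,v_2)\in\ZZ^2$ (when $p$ even, $k$ odd) and $(v_1,v_2,v_3,v_4)\in I_4$ (when $p$ odd, $k$ odd) then collapse the relevant products using the two intersection identities above, producing the four rows for $M=4$.

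It is then worth recording, as a consistency check, that each set displayed is genuinely a subgroup under the oscillator product $(z_1,v_1,t_1)(z_2,v_2,t_2)=(z_1+z_2+\frac12 v_1^{\tau}JR(t_1)v_2,\,v_1+R(t_1)v_2,\,t_1+t_2)$: closure in the center is automatic since that coordinate is all of $\RR$, the displayed $v$-sets are additive subgroups of $\RR^4$, and each is invariant under $R(t)$ for $t\in q\frac{\pi}{2}\ZZ$. The two points needing care are exactly where I expect the main work to lie. The additive closure of $I_2$ and $I_4$ rests on the parity identity that a sum of two half-odd-integers is an integer and an integer plus a half-odd-integer is again half-odd-integral, so that $\ZZ^{2m}$ and $F^{2m}$ recombine correctly. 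The $R(t)$-invariance requires observing that for $t\in q\frac{\pi}{2}\ZZ$ the two diagonal blocks of $R(t)$ rotate by integer multiples of $\frac{\pi}{2}$, and a rotation by $\frac{\pi}{2}$ sends $(a,b)\mapsto(-b,a)$, which preserves both $\ZZ^2$ and $F^2$, hence $I_2$ and $I_4$. This last parity-tracking across the two blocks --- ultimately the same phenomenon that in \eqref{ab5} distinguished $I_2\times I_2$ from $I_4$ according to the parity of $k$ --- is the one place where a careless sign or parity would give the wrong row.
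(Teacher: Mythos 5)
Your proposal is correct and takes essentially the same route as the paper, whose entire proof is the single remark that the table is ``straightforward from the previous Lemma when combining all conditions'' --- exactly your case-by-case intersection via $\frac{1}{2}\ZZ\subseteq\frac{1}{2k}\ZZ$ and $F\subseteq\frac{1}{2k}\ZZ$, $I_2\cap\ZZ^2=\ZZ^2$, and $(I_2\times I_2)\cap I_4=I_4$, all of which check out against the Lemma's table. Your two supplements --- upgrading the inclusion $g\Lambda g^{-1}\subseteq\Lambda$ to equality via unimodularity and covolume (the paper's Lemma is phrased only as a sufficiency, though the derivation of \eqref{cc1}--\eqref{ccc3} proceeds from necessity), and the sanity check that each displayed set is closed under the oscillator product and $R(t)$-invariant for $t\in q\frac{\pi}{2}\ZZ$ --- are sound and fill in details the paper leaves implicit, without changing the argument.
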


This result is straightforward from the previous Lemma when combining all conditions.

	\appendix 

\end{document}